\documentclass[11pt]{article}
\usepackage{amscd}
\usepackage{amsmath}

\usepackage{amsfonts}
\usepackage{setspace}
\usepackage{url}
\usepackage{amssymb}
\usepackage{stmaryrd}
\usepackage{mathrsfs}
\usepackage{latexsym,bm}
\usepackage{yfonts}
\usepackage{amsmath,amssymb}
\usepackage[all,poly,knot]{xy}
\usepackage{mathtools}

\usepackage[hang,small,bf]{caption}

\usepackage{amsthm}

\usepackage{CJK}     
\usepackage{geometry} 
\usepackage{fancyhdr} 
\usepackage{fancyvrb} 
\usepackage{fancybox} 
\usepackage{amsmath,amsfonts,amssymb,graphicx}  
\usepackage{subfigure}  
\usepackage{indentfirst} 
\usepackage{bm}     
\usepackage{multicol}  
\usepackage{picins}   
\usepackage{abstract}  
\usepackage{anysize} 
\usepackage{listings}\lstloadlanguages{C,C++,matlab,mathematica} 
\usepackage{color} 
\usepackage{units} 
\usepackage{tabularx} 
\usepackage{mathrsfs} 
\usepackage{bm}
\usepackage{geometry}
\geometry{left=2.5cm,right=2.5cm,top=2.5cm,bottom=2.5cm}

\newtheorem{Theorem}{Theorem}[section]

\newtheorem{Proposition}[Theorem]{Proposition}
\newtheorem{Corollary}[Theorem]{Corollary}

\newtheorem{Example}[Theorem]{Example}


\catcode`@=11 \@addtoreset{equation}{section} \catcode`@=12
\parskip=2pt
\baselineskip=15pt \setcounter{page}{1}
\title{ Orthogonal inner product graphs of odd characteristic and their automorphisms\thanks{
Supported by the National Natural Science Foundation of China (No.\:11371343), Guangxi science and technology development foundation (No.\:1599005-2-13), the Guangxi Science
Foundation (No.\:0832107, 0640070, 0991102) and the Scientific Research Foundation of Guangxi Educational Committee.}}
\author{Shouxiang ZHAO$^{\rm a,b}$, Hengbin ZHANG$^{\rm c}${\thanks{Corresponding author.
			E-mail: shouxiangzhao@163.com}, Jizhu NAN$^{\rm a}$,
Gaohua TANG$^{\rm d}$ }}
\date{$^{\rm a}${\small School of Mathematical Sciences, Dalian University of
Technology, Dalian 116024, P.R. China}\\
$^{\rm b}${\small Department of Mathematics and Computer
Science, Guilin Normal College, Guilin 541001, P.R. China}\\
$^{\rm c}${\small College of Mathematical Science, Yangzhou University, Yangzhou 225002, P.R. China}\\
$^{\rm d}${\small School of Mathematical Sciences, Guangxi Teachers Education
University, Nanning, 530023, P.R. China}}

\small
\begin{document}
\maketitle{}{ \textbf{Abstract:}} Let $\mathbb{F}_q$ be a finite field of odd characteristic and $2\nu+\delta\geq2$ an integer number with $\delta=0,1$ or $2$. The orthogonal inner product graph $Oi\big(2\nu+\delta,q\big)$ over $\mathbb{F}_q$ is defined and the automorphism groups of $Oi\big(2\nu+\delta,q\big)$ are determined. We show that $Oi\big(2\nu+\delta,q\big)$ is a disconnected graph if $2\nu+\delta=2$; otherwise it is not. Moreover, we have two necessary and sufficient conditions for two vertices of $Oi\big(2\nu+\delta,q\big)$ and two edges of $Oi\big(2\nu+\delta,q\big)$ respectively are in the same orbit under the action of the automorphism group of $Oi\big(2\nu+\delta,q\big).$

\textbf{Keywords:} Graph; Orthogonal group; Orthogonal subspace; Automorphism

\textbf{MSC (2010):} 05C25, 05C60, 15A63

\vskip3mm

\section{Introduction}
\setlength{\parindent}{2em}

Let $\mathbb{F}_q$ be a finite field of odd characteristic and $n\geq 1$ an integer number. Let $\mathbb{F}_q^{(n)}=\{(a_1, \ldots,a_n) : a_i\in \mathbb{F}_q\ {\rm for}\ i=1,\ldots,n\}$ be the $n$-dimensional vector space over $\mathbb{F}_q$. For the $m$-dimensional vector subspace $P$ of $\mathbb{F}_q^{(n)}$, any $m\times n$ matrix whose rows, $\alpha_1,\ldots, \alpha_m$, form a basis of the space $P$, is called a \emph{matrix representation} of the space $P$, which is also denoted by $P$. To simplify notation, sometimes we write $[\alpha_1,\ldots,\alpha_m]$ instead of the matrix $P$. Let $e_1=(1,0,\ldots,0), e_2=(0,1,\ldots,0), \ldots, e_n=(0,0,\ldots,1)$. Then $e_i,i=1,2,\ldots,n$, form a basis of $\mathbb{F}_q^{(n)}$. The set of all non-zero elements of $\mathbb{F}_q$, denoted by $\mathbb{F}_q^{*}$, form a cyclic group under field multiplication, called the \emph{multiplicative group} of $\mathbb{F}_q$. The set of all square elements of $\mathbb{F}_q^*$, denoted by $\mathbb{F}_q^{*2}$, is  a subgroup of $\mathbb{F}_q^*$. Let $z$ be a fixed non-square element in $\mathbb{F}_q^*$ and
\begin{displaymath}
S_{2\nu+\delta,\Delta}=
\left( \begin{array}{ccc}
0 & I^{(\nu)} & \\
I^{(\nu)}& 0 & \\
 & & \Delta
\end{array} \right),
\end{displaymath}
be four $(2\nu+\delta)\times (2\nu+\delta)$ nonsingular symmetric matrices, where~
\begin{displaymath}\Delta= \left\{ \begin{array}{ll}
\emptyset,   & \textrm{if $\delta=0,$}\\
(1)\ {\rm or}\ (z),   & \textrm{if $\delta=1,$}\\
{\rm diag}(1,-z), & \textrm{if $\delta=2.$}
\end{array} \right.
\end{displaymath}
To simplify notations, we omit $\emptyset$ when $\delta=0$ and write $S$ for $S_{2\nu+\delta,\Delta}$. A $(2\nu+\delta)\times (2\nu+\delta)$ nonsingular matrix $T$ over $\mathbb{F}_q$ is called \emph{orthogonal} with respect to $S$ if $TS\, ^{t}\!T=S$. The set of all $(2\nu+\delta)\times (2\nu+\delta)$ orthogonal matrices form a group under matrix multiplication, called the \emph{orthogonal group} of degree $2\nu+\delta$ with respect to $S$ over $\mathbb{F}_q$ and denoted by $O_{2\nu+\delta}\big(\mathbb{F}_q\big)$. The factor group $O_{2\nu+\delta}\big(\mathbb{F}_q\big) \big/ \big\{I, -I\big\}$ is called the \emph{projective orthogonal group} of degree $2\nu+\delta$ over $\mathbb{F}_q$ and denoted by $PO_{2\nu+\delta}\big(\mathbb{F}_q\big)$.
There is an action of $O_{2\nu+\delta}(\mathbb{F}_q)$ on $\mathbb{F}_q^{(2\nu+\delta)}$ defined as follows:
\begin{eqnarray*}
\mathbb{F}_q^{(2\nu+\delta)}\times O_{2\nu+\delta}(\mathbb{F}_q) &\longrightarrow& \mathbb{F}_q^{(2\nu+\delta)}\\
((x_1,\ldots,x_{2\nu+\delta}),T) &\longrightarrow& (x_1,\ldots,x_{2\nu+\delta})T.
\end{eqnarray*}
The vector space $\mathbb{F}_q^{(2\nu+\delta)}$ together with this group action is said to be the $(2\nu+\delta)$-dimensional \emph{orthogonal space} over $\mathbb{F}_q.$ The map: $\mathbb{F}_q^{(2\nu+\delta)}\times \mathbb{F}_q^{(2\nu+\delta)}\to \mathbb{F}_q$ given by $(\alpha,\beta)\mapsto\alpha S\,{^t\!\beta}$ is said to be an \emph{orthogonal inner product} of $\mathbb{F}_q^{(2\nu+\delta)}$ with respect to $S$. For any subspace $P$ of $\mathbb{F}_q^{(2\nu+\delta)}$, the set $\{\alpha \in \mathbb{F}_q^{(2\nu+\delta)}:\alpha S\, ^{t}\!\beta = 0$ for all $\beta \in P\}$, denoted by $P^\perp$, is said to be the \emph{dual subspace} of $P$ with respect to $S$.

\medskip
In general, determining all the automorphisms of a graph is an important but difficult problem both in graph theory and in algebraic theory. In 2006, Tang and Wan \cite{ztzw} introduced the concept of the symplectic graph over a finite field. They proved that the symplectic graph is strongly regular and determined its automorphism group. In \cite{zgzw,zwkz,zwkz2}, the authors introduced the concepts of the orthogonal graph and the unitary graph over a finite field and determined their automorphism groups, respectively. In 2014, Wong et al.\:\cite{dwxmjz} introduced the concept of the zero-divisor graph based on rank one upper triangular matrices and also determined its automorphism group. Motivated by previous studies, we introduce a new graph on non-trivial orthogonal spaces over a finite field for studying the interplay between properties of orthogonal subspaces and the structure of graphs.
\medskip

The \emph{orthogonal inner product graph} with relative to $S$ over $\mathbb{F}_q$, denoted by $Oi\big(2\nu+\delta,q\big)$, is the graph defined on all non-trivial orthogonal subspaces of $\mathbb{F}_q^{(2\nu+\delta)}$ with an edge between vertices $A$ and $B,$ denoted by $A$---$B,$ if and only if $AS\,^{t}\!B=0$. The set of all vertices and all edges of $Oi\big(2\nu+\delta,q\big)$ are denoted by $V\big(Oi\big(2\nu+\delta,q\big)\big)$ and $E\big(Oi\big(2\nu+\delta,q\big)\big),$ respectively$.$ Of course, if $A$---$B$ is in $E\big(Oi\big(2\nu+\delta,q\big)\big)$, then $B$ is a subspace of $A^\perp$ and $A$ is a subspace of $B^\perp$.
\medskip

In section 2, we show that $Oi\big(2\nu+\delta,q\big)$ is connected with diameter $4$ when $2\nu+\delta\geq3$ and also show that $Oi\big(2\nu_1+\delta_1,q_1\big)\cong Oi\big(2\nu_2+\delta_2,q_2\big)$ if and only if $\nu_1=\nu_2,$ $\delta_1=\delta_2$ and $q_1=q_2$. In section 3, we obtain two necessary and sufficient conditions: (1) two vertices of $V\big(Oi\big(2\nu+\delta,q\big)\big)$ are in the same orbit under the action of ${\rm Aut}\big(Oi\big(2\nu+\delta,q\big)\big)$ if and only if they are the same type subspace; (2) two edges $g$ and $f$ of $E\big(Oi\big(2\nu+\delta,q\big)\big)$ are in the same orbit under the action of ${\rm Aut}\big(Oi\big(2\nu+\delta,q\big)\big)$ if and only if the following conditions hold: (\romannumeral1) one vertex of $g$ and one vertex of $f$ are the same type subspace, (\romannumeral2) the other vertex of $g$ and the other vertex of $f$ are the same type subspace, and (\romannumeral3) the sum of the two vertices of $g$ and the sum of the two vertices of $f$ are also the same type subspace. Furthermore, we show that ${\rm Aut}\big(Oi\big(2\nu+\delta,q\big)\big)= PO_{2\nu+\delta}\big(\mathbb{F}_q\big)\cdot E_{2\nu+\delta}$, where $E_{2\nu+\delta}$ is a subgroup of ${\rm Aut}\big(Oi\big(2\nu+\delta,q\big)\big)$.

\section{\boldmath Some properties of orthogonal inner graphs}

In \cite{jabusrm}, a graph $G$ is \emph{connected} if there is a path between any two vertices of $G$; otherwise $G$ is disconnected. The \emph{distance} between vertices $a$ and $b$ in $G$ is the number of edges in a shortest path between $a$ and $b$ and denoted by $d(a,b)$. The \emph{diameter} of G is the greatest distance between any two vertices of $G$ and denoted by ${\rm diam}(G)$. The \emph{degree} of $x\in G$ is defined to be the number of edges of the form $x$---$y$ in $G$ and denoted by $\deg(x)$.


\begin{Theorem} Let $\mathbb{F}_q$ be a finite field. Then $Oi\big(2\nu+\delta,q\big)$ is a connected graph if and only if $2\nu+\delta\geq3$. Moreover, if $Oi\big(2\nu+\delta,q\big)$ is a connected graph, then diam$\big(Oi\big(2\nu+\delta,q\big)\big)=4$.
\end{Theorem}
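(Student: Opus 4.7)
The plan is to handle the two halves of the biconditional separately and then pin down the diameter. For the disconnected direction $2\nu+\delta=2$, every non-trivial subspace has dimension $1$, and every vertex $\langle\alpha\rangle$ has its neighbors confined to the $1$-dimensional perpendicular $\langle\alpha\rangle^\perp$. Hence each vertex is either isolated (when $\alpha$ is isotropic) or has the single neighbor $\langle\alpha\rangle^\perp$, so the graph is a disjoint union of edges and isolated vertices. In the two admissible shapes $(\nu,\delta)=(1,0)$ and $(0,2)$, a short count of isotropic versus non-isotropic lines in $\mathbb{F}_q^{(2)}$—using in particular that $\mathrm{diag}(1,-z)$ has no isotropic line because $z$ is a non-square—shows that the number of components is at least two for every odd $q$.

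For $2\nu+\delta\geq 3$ I plan to prove $\mathrm{diam}\bigl(Oi(2\nu+\delta,q)\bigr)\leq 4$ by routing every pair through $1$-dimensional vertices. For any non-trivial $A$, the bound $\dim A^\perp\geq 1$, together with the observation that when $A$ is an isotropic line one still has $\dim A^\perp=2\nu+\delta-1\geq 2$, supplies a $1$-dimensional neighbor $P$ of $A$ distinct from $A$. It then suffices to show that any two $1$-dimensional vertices $P=\langle\alpha\rangle$ and $Q=\langle\beta\rangle$ satisfy $d(P,Q)\leq 2$. If $\alpha S\,{}^t\beta=0$ they are adjacent; otherwise the dimension inequality $\dim(P^\perp\cap Q^\perp)\geq 2\nu+\delta-2\geq 1$ supplies a nonzero $\gamma\in P^\perp\cap Q^\perp$, and the hypothesis $\alpha S\,{}^t\beta\neq 0$ rules out $\langle\gamma\rangle=P$ and $\langle\gamma\rangle=Q$, so $P$---$\langle\gamma\rangle$---$Q$ is a valid length-$2$ path. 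Concatenating gives $d(A,B)\leq 1+2+1=4$.

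For the matching lower bound I would exhibit two vertices at distance exactly $4$. Pick a non-isotropic vector $\alpha$ (available in odd characteristic for every shape with $2\nu+\delta\geq 3$) and then a vector $v$ with $vS\,{}^t\alpha\neq 0$ and $\langle v\rangle\neq\langle\alpha\rangle$; set $A=\langle\alpha\rangle^\perp$ and $B=\langle v\rangle^\perp$, both of dimension $2\nu+\delta-1$. Because $A^\perp=\langle\alpha\rangle$ and $B^\perp=\langle v\rangle$ are $1$-dimensional, every neighbor of $A$ must equal $\langle\alpha\rangle$ and similarly for $B$. Examining paths of length $1$, $2$, and $3$ accordingly reduces, in each case, to one of the forbidden identities $\langle v\rangle=\langle\alpha\rangle$ or $\alpha S\,{}^tv=0$, so $d(A,B)\geq 4$ and therefore $d(A,B)=4$.

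The principal technical point lies in the small case $2\nu+\delta=3$: there $\dim(P^\perp\cap Q^\perp)$ drops to $1$, so the candidate intermediate vertex $\langle\gamma\rangle$ is uniquely determined, and the entire length-$2$ argument relies on the hypothesis $\alpha S\,{}^t\beta\neq 0$ forcing this unique line to differ from both $P$ and $Q$. Once this wrinkle and the bookkeeping of isotropic subspaces are dealt with, the upper and lower bounds match and the theorem follows.
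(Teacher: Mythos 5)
Your proof is correct and follows essentially the same route as the paper's: an upper bound of $4$ obtained by routing any pair $A$, $B$ through one-dimensional neighbours chosen inside $A^\perp$ and $B^\perp$ and then joining two lines via a common perpendicular line, together with a lower bound realized by the two hyperplanes $\langle\alpha\rangle^\perp$ and $\langle v\rangle^\perp$ for a non-perpendicular pair of lines (the paper's witness $A=[e_1,\ldots,e_{2\nu-1}]$, $B=[e_1,\ldots,e_{\nu-1},e_{\nu+1},\ldots,e_{2\nu}]$ is exactly this construction with $\alpha=e_\nu$, $v=e_{2\nu}$). Your write-up is somewhat more careful and uniform---it treats all $\delta$, the borderline case $2\nu+\delta=3$, and both shapes of $Oi(2,q)$ explicitly, where the paper only writes out $\delta=0$---but the underlying argument is the same.
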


\begin{proof}  In order to prove the theorem, we need only consider three cases: $\delta=0,1$ and $2$. Without loss of generality, we need only consider the case of $\delta=0$. Other cases are similar to prove. Suppose that $\delta=0$.

Let $\delta=0$ and $\nu=1$. Clearly, orthogonal subspaces $[e_1]$ and $[e_2]$ of $\mathbb{F}_q^{(2)}$ are different. Since $[e_1]=[e_1]^{\bot}$, $[e_2]=[e_2]^{\bot}$ and $[e_1,e_2]^{\bot}=0$, we have d$([e_1],[e_2])=\infty$ and so $Oi\big(2,q\big)$ is not connected.

Let  $\delta=0$, $\nu\geq2$ and $A,B\in V\big(Oi\big(2\nu,q\big)\big)$. If $A$ and $B$ are adjacent, then $A$---$B$ is a path of length 1. So in the remainder of the proof we assume that $A$ and $B$ are not adjacent. Then there exist $[\alpha],[\beta]\in V\big(Oi\big(2\nu,q\big)\big)$ such that $[\alpha]\subseteq A^\perp$ and $[\beta]\subseteq B^\perp$. If $[\alpha]$ and $[\beta]$ are adjacent, then $A$--- $[\alpha]$---$[\beta]$---$B$ is a path of length 3. If $[\alpha]$ and $[\beta]$ are not adjacent, then $\dim([\alpha,\beta]^{\perp})\geq2\nu-2>2$ and there exists $[\gamma]\subseteq [\alpha,\beta]^{\perp}$ such that $A$---$[\alpha]$---$[\gamma]$---$[\beta]$---$B$ is a path of length 4. Therefore $Oi\big(2\nu,q\big)$ is connected and diam$\big(Oi\big(2\nu,q\big)\big)\leq4$. Let $A=[e_1,\ldots,e_{2\nu-1}]$ and $B=[e_1,\ldots,e_{\nu-1},e_{\nu+1},\ldots,e_{2\nu}]$ with $A^{\bot}=[e_\nu]$ and $B^{\bot}=[e_{2\nu}]$. Then $A$---$[e_\nu]$---$[e_1]$---$[e_{2\nu}]$---$B$ is a shortest path of length 4 between $A$ and $B$ and so diam$\big(Oi\big(2\nu,q\big)\big)=4$.
\end{proof}

The set of all vertices of dimension 1 in $Oi\big(2\nu+\delta,q\big)$ can induce a subgraph of $Oi\big(2\nu+\delta,q\big)$, denoted by $Oi\big(2\nu+\delta,1,q\big)$, with an edge between vertices $A$ and $B$ if and only if $AS\,^{t}\!B=0.$ Of course, if $X\in V\big(Oi\big(2\nu+\delta,1,q\big)\big)$, then $X$---$X^{\perp}$ is in $E\big(Oi\big(2\nu+\delta,q\big)\big)$ with $\deg(X^{\perp})=1$. Next, we introduce some notation and terminology  from \cite{zwG} that will be used in this paper$.$ We will denote by $\mathcal{M}(m,2s+\gamma,s,\Gamma;2\nu+\delta,\Delta)$ the set of subspaces of type $(m,2s+\gamma,s,\Gamma)$ of $\mathbb{F}_q^{(2\nu+\delta)}$ with respect to $S$ and write $N(m,2s+\gamma,s,\Gamma;2\nu+\delta,\Delta)= |\mathcal{M}(m,2s+\gamma,s,\Gamma;2\nu+\delta,\Delta)|.$ Let $P$ be a fixed subspace of type $(m,2s+\gamma,s,\Gamma)$ in $\mathbb{F}_q^{(2\nu+\delta)}.$ We will denote by $\mathcal{M}_P(m_1,2s_1+\gamma_1,s_1,\Gamma_1;m,2s+\gamma,s,\Gamma;2\nu+\delta,\Delta)$ the set of subspaces of type $(m_1,2s_1+\gamma_1,s_1,\Gamma_1)$ contained in $P$ and write $N(m_1,2s_1+\gamma_1,s_1,\Gamma_1;m,2s+\gamma,s,\Gamma;2\nu+\delta,\Delta)= |\mathcal{M}_P(m_1,2s_1+\gamma_1,s_1,\Gamma_1;m,2s+\gamma,s,\Gamma;2\nu+\delta,\Delta)|.$ Then
$$V\big(Oi\big(2\nu+\delta,1,q\big)\big)= \mathcal{M}(1,0,0;2\nu+\delta,\Delta)\bigcup\mathcal{M}(1,1,0,1;2\nu+\delta,\Delta) \bigcup\mathcal{M}(1,1,0,z;2\nu+\delta,\Delta)$$
and $|V\big(Oi\big(2\nu+\delta,1,q\big)\big)|=\frac{q^{2\nu+\delta}-1}{q-1}$ by \cite[Theorem 6.26]{zwG}.

\vskip 0.3cm

\begin{Theorem} Let $\mathbb{F}_{q_1}$ and $\mathbb{F}_{q_2}$ be finite fields. Then $Oi\big(2\nu_1+\delta_1,q_1\big)\cong Oi\big(2\nu_2+\delta_2,q_{2}\big)$ if and only if $\nu_1=\nu_2$, $\delta_1=\delta_2$ and $q_1=q_2$.
\end{Theorem}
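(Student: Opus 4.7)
The plan is to recover the triple $(\nu,\delta,q)$ from graph-theoretic invariants of $Oi\big(2\nu+\delta,q\big)$. The reverse direction is trivial. For the forward direction, set $n_i=2\nu_i+\delta_i$ and let $\phi:Oi(n_1,q_1)\to Oi(n_2,q_2)$ be a graph isomorphism. By Theorem~2.1, disconnectedness occurs precisely when $n=2$, so $n_1=2$ iff $n_2=2$. In the $n=2$ case I check directly that $Oi(2,q)$ has $q+1$ vertices in total and consists of $2$ isolated vertices plus $(q-1)/2$ disjoint edges when $\delta=0$, and of a disjoint union of $(q+1)/2$ edges with no isolated vertex when $\delta=2$; the isolated-vertex count together with the total vertex count forces $\delta_1=\delta_2$ and $q_1=q_2$.

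For $n_1,n_2\geq 3$, the first step is to characterize the $1$-dimensional vertices from the graph alone. I would show that the degree-$1$ vertices of $Oi(n,q)$ are precisely the codimension-$1$ subspaces: a vertex $A$ of dimension $m$ has as neighbors the nonzero subspaces of $A^\perp$ (discarding $A$ itself if $A\subseteq A^\perp$), and for $n\geq 3$ this number equals $1$ only when $\dim A^\perp=1$. The unique neighbor of such a degree-$1$ vertex $Y$ is then $Y^\perp$, which is $1$-dimensional. Hence the set of $1$-dimensional vertices coincides with the set of neighbors of degree-$1$ vertices, so $\phi$ restricts to an isomorphism of the induced subgraphs $Oi(n_1,1,q_1)\cong Oi(n_2,1,q_2)$.

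Within $Oi(n,1,q)$ I read off two invariants: the vertex count $N_1=(q^n-1)/(q-1)$ and the maximum degree $M_1=(q^{n-1}-1)/(q-1)$. Here $M_1$ is attained on any non-isotropic $1$-dimensional subspace (which exists for every admissible $(\nu,\delta)$), whose neighbors are the $M_1$ distinct $1$-dimensional subspaces of its perpendicular hyperplane; an isotropic $1$-dimensional subspace has one fewer neighbor and contributes degree $M_1-1$. The telescoping identity $N_1-1=q\,M_1$ then yields the closed form
\[
q\;=\;\frac{N_1-1}{M_1},
\]
recovering $q$ uniquely, and $n$ is in turn determined from $N_1$. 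So $q_1=q_2$ and $n_1=n_2$. With $n$ and $q$ fixed, $\delta=1$ is forced when $n$ is odd; when $n$ is even I separate $\delta=0$ from $\delta=2$ by counting the $1$-dimensional vertices of degree exactly $M_1-1$ in $Oi(n,1,q)$ (these are precisely the isotropic $1$-dimensional subspaces), whose totals differ between the two Witt types by the formulas of \cite{zwG}. This gives $\delta_1=\delta_2$ and hence $\nu_1=\nu_2$.

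I expect the main obstacle to be the graph-theoretic identification of the $1$-dimensional vertices in the first place; once that characterization is in hand, the explicit formula $q=(N_1-1)/M_1$ makes the parameter extraction elementary and neatly sidesteps the Goormaghtigh-type ambiguity that one would face if trying to recover $q$ from $N_1$ alone.
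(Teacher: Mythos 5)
Your proposal is correct, and it reaches the conclusion by a genuinely different route than the paper. The paper's proof is terse: it passes immediately to the induced subgraph on $1$-dimensional vertices, records the vertex count $(q^{2\nu+\delta}-1)/(q-1)$, and then extracts $\nu+\delta$ as the size of a maximum clique of pairwise orthogonal lines and $\delta$ as the number of non-isotropic lines in that clique, from which all three parameters follow. You instead (i) treat the $2\nu+\delta=2$ case separately via disconnectedness and the isolated-vertex count, (ii) justify why an isomorphism must preserve the set of $1$-dimensional vertices at all --- by showing the degree-$1$ vertices of $Oi(2\nu+\delta,q)$ are exactly the hyperplanes and the lines are exactly their unique neighbours, a step the paper silently assumes --- and (iii) recover $q$ from the clean identity $N_1-1=qM_1$ relating the order and maximum degree of the induced subgraph, then separate the two Witt types of even-dimensional spaces by counting isotropic lines. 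Your route buys two real advantages: it sidesteps the Goormaghtigh-type ambiguity inherent in reading $q$ and $n$ off the single quantity $(q^n-1)/(q-1)$, and it avoids the paper's maximum-clique claim $|M_i|=\nu_i+\delta_i$, which as stated is delicate (a totally isotropic $\nu$-dimensional subspace already yields $(q^\nu-1)/(q-1)$ pairwise orthogonal lines, so the intended notion of clique needs care). One small point to tighten: since the paper's graphs carry loops at totally isotropic vertices (see Example~3.1), whether an isotropic line has degree $M_1-1$ or $M_1$ in $Oi(2\nu+\delta,1,q)$ depends on whether a loop contributes to the degree; it is safer to identify the isotropic lines as the $1$-dimensional vertices carrying a loop, which any isomorphism preserves, and then compare their counts for the two even types exactly as you propose.
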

\begin{proof} First suppose that $\nu_1=\nu_2$, $\delta_1=\delta_2$ and $q_1=q_2$. Then $Oi\big(2\nu_1+\delta_1,q_1\big)\cong Oi\big(2\nu_2+\delta_2,q_{2}\big)$, completing the proof in one direction.

For the other direction, suppose that $Oi\big(2\nu_1+\delta_1,q_1\big)\cong Oi\big(2\nu_2+ \delta_2,q_{2}\big)$. We need only consider $Oi\big(2\nu_i+\delta_i,q_i\big)$ for $i=1,2$. Then $(q_1^{2\nu_1+ \delta_1}-1)/(q_1-1)= (q_2^{2\nu_2+\delta_2}-1)/(q_2-1)$. Take $M_i$ being a maximum complete subgraph (without considering circles) of $Oi\big(2\nu_i+\delta_i,1,q_i\big)$ such that $XS_{2\nu_i+\delta_i} \, ^{t}\!Y=0$ for all different $X,Y\in M_i$. Let $N_i=\{ Z\in M_i:ZS_{2\nu_i+\delta_i} \, ^{t}\!Z\neq0\}$ for $i=1,2$. Then $\nu_1+\delta_1=|M_1|=|M_2|= \nu_2+\delta_2\ {\rm and}\ \delta_1=|N_1|=|N_2|=\delta_2.$ Thus $\nu_1=\nu_2$, $\delta_1=\delta_2$ and $q_1=q_2$.
\end{proof}

\begin{Proposition}  \label{2.3} Let $Oi\big(2\nu+\delta,q\big)$ be the orthogonal inner graph over $\mathbb{F}_{q}$ and $\sigma\in {\rm Aut}\big(Oi\big(2\nu+\delta,q\big)\big)$. Then $\sigma\big(Oi\big(2\nu+\delta,1,q\big)\big)=Oi\big(2\nu+\delta,1,q\big)$.
\end{Proposition}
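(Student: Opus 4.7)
The plan is to characterize $V\big(Oi(2\nu+\delta,1,q)\big)$ purely in terms of the abstract graph structure of $Oi(2\nu+\delta,q)$, so that any graph automorphism must automatically preserve it. The case $2\nu+\delta=2$ can be dispatched immediately: every non-trivial orthogonal subspace of $\mathbb{F}_q^{(2)}$ has dimension $1$, so $Oi(2\nu+\delta,1,q)=Oi(2\nu+\delta,q)$ and the conclusion is trivial. From here on I would assume $2\nu+\delta\geq 3$.

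The key invariant is the vertex degree. My first claim is that the degree-$1$ vertices of $Oi(2\nu+\delta,q)$ are exactly the subspaces of dimension $2\nu+\delta-1$. The neighbours of a vertex $Z$ are precisely the non-trivial subspaces $Y\neq Z$ contained in $Z^\perp$. If $\dim Z=2\nu+\delta-1$ then $\dim Z^\perp=1$, so the only non-trivial subspace of $Z^\perp$ is $Z^\perp$ itself, which is distinct from $Z$ by dimension because $2\nu+\delta-1\neq 1$; hence $\deg(Z)=1$. Conversely, if $\dim Z^\perp\geq 2$, then $Z^\perp$ already contains at least $q+1\geq 4$ distinct $1$-dimensional subspaces, so $\deg(Z)\geq q\geq 3$ even after discarding $Z$ itself should $Z$ happen to lie in $Z^\perp$.

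Next, I would identify the $1$-dimensional vertices as exactly those vertices which are adjacent to some degree-$1$ vertex. If $\dim X=1$, then $X^\perp$ has dimension $2\nu+\delta-1$, hence is a degree-$1$ vertex, and $X$ is adjacent to $X^\perp$ since $X\,S\,{}^t(X^\perp)=0$ by definition of the dual. Conversely, suppose $W$ is adjacent to a degree-$1$ vertex $U$; then $\dim U=2\nu+\delta-1$, so $U^\perp$ is $1$-dimensional, and the adjacency forces $W\subseteq U^\perp$. Non-triviality of $W$ then forces $W=U^\perp$, which is $1$-dimensional.

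Since any automorphism $\sigma$ preserves both vertex degrees and adjacency, it maps the set of degree-$1$ vertices onto itself, and hence also maps the set of vertices adjacent to some degree-$1$ vertex onto itself. By the characterization above this gives $\sigma\big(V(Oi(2\nu+\delta,1,q))\big)=V(Oi(2\nu+\delta,1,q))$, and edge-preservation inside $Oi(2\nu+\delta,q)$ then yields $\sigma\big(Oi(2\nu+\delta,1,q)\big)=Oi(2\nu+\delta,1,q)$. There is no substantive obstacle in this route; the only detail demanding attention is the dimension inequality $2\nu+\delta-1\neq 1$, which is precisely why the $2\nu+\delta=2$ case must be handled separately at the outset.
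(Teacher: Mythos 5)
Your proposal is correct and follows essentially the same route as the paper: both recognize the $1$-dimensional vertices intrinsically as precisely the vertices adjacent to a degree-one vertex (equivalently, to a $(2\nu+\delta-1)$-dimensional subspace, whose unique neighbour is its dual), and then invoke preservation of degrees and adjacency under $\sigma$. Your write-up is in fact more careful than the paper's, which asserts the condition as $\deg(A)=2\nu+\delta-1$ where the correct graph-theoretic invariant is $\deg(A)=1$ with $\dim A=2\nu+\delta-1$, and you rightly isolate the degenerate case $2\nu+\delta=2$.
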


\begin{proof} Let $X\in V\big(Oi\big(2\nu+\delta,1,q\big)\big)$ and $\sigma\in {\rm Aut}\big(Oi\big(2\nu+\delta,q\big)\big)$. Then there exists $A\in V\big(Oi\big(2\nu+\delta,q\big)\big)$ with $ XS_{2\nu+\delta}\, ^{t}\!A=0$ and deg$(A)=2\nu+\delta-1$. Since $\sigma\in {\rm Aut}\big(Oi\big(2\nu+\delta,q\big)\big)$, it follows that $\sigma(X)S_{2\nu+\delta}\, ^{t}\!(\sigma(A))=0$ and deg$(\sigma(A))=2\nu+\delta-1$. So $\sigma(X)\in V\big(Oi\big(2\nu+\delta,1,q\big)\big)$. Since $\sigma$ is injective and $V\big(Oi\big(2\nu+\delta,1,q\big)\big)$ is finite, we have $\sigma\big(Oi\big(2\nu+\delta,1,q\big)\big)=Oi\big(2\nu+\delta,1,q\big)$.
\end{proof}

It follows that the complement graph of the orthogonal graph in \cite{zgzw} is a proper subgraph of $Oi\big(2\nu+\delta,1,q\big)$. In fact, the vertices set of the orthogonal graph in \cite{zgzw} contains all $1$-dimensional totally isotropic orthogonal subspaces of $\mathbb{F}_q^{(2\nu+\delta)}$. But $Oi\big(2\nu+\delta,1,q\big)$ contains all $1$-dimensional orthogonal subspaces of $\mathbb{F}_q^{(2\nu+\delta)}$.

\begin{Proposition}  \label{2.4} Let $T\in O_{2\nu+\delta}\big(\mathbb{F}_q\big)$ and
$$\sigma_T:V\big(Oi\big(2\nu+\delta,q\big)\big)\longrightarrow V\big(Oi\big(2\nu+\delta,q\big)\big),A\longmapsto AT,$$
for $\delta=0,1$ or $2.$ Then

\noindent${\rm(1)}$ $\sigma_T\in {\rm Aut}\big(Oi\big(2\nu+\delta,q\big)\big),$

\noindent ${\rm(2)}$ For any $T_1,T_2\in O_{2\nu+\delta}\big(\mathbb{F}_q\big)$, $\sigma_{T_1}= \sigma_{T_2}$ if and only if $T_1=\pm T_2$.
\end{Proposition}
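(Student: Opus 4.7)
The plan is to prove (1) by a direct unwinding of the definitions and to reduce (2) to an argument on $1$-dimensional subspaces. For (1), I would first observe that since $T$ is nonsingular, $A\mapsto AT$ sends any $m$-dimensional subspace of $\mathbb{F}_q^{(2\nu+\delta)}$ to an $m$-dimensional subspace, so $\sigma_T$ is a well-defined self-map of $V(Oi(2\nu+\delta,q))$. The map $\sigma_{T^{-1}}$ then furnishes a two-sided inverse (and lives in the same family because $T^{-1}\in O_{2\nu+\delta}(\mathbb{F}_q)$), giving bijectivity. Finally, adjacency is preserved by the one-line calculation
\[
(AT)\,S\,^{t}\!(BT)=A\bigl(TS\,^{t}\!T\bigr)\,^{t}\!B=AS\,^{t}\!B,
\]
which uses only $TS\,^{t}\!T=S$; hence $\sigma_T\in\mathrm{Aut}(Oi(2\nu+\delta,q))$.

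For (2), the ``if'' direction is immediate: $T_1=T_2$ gives $\sigma_{T_1}=\sigma_{T_2}$ trivially, and $T_1=-T_2$ gives $AT_1=-AT_2$, which has the same row space as $AT_2$. For the ``only if'' direction, I would test the equality $\sigma_{T_1}=\sigma_{T_2}$ on $1$-dimensional subspaces, all of which are vertices by the discussion preceding Proposition~\ref{2.3}. Applying the equality to $[e_i]$ shows that the $i$-th rows of $T_1$ and $T_2$ span the same line, so there exist $\lambda_1,\ldots,\lambda_{2\nu+\delta}\in\mathbb{F}_q^{*}$ with $T_1=DT_2$, where $D=\mathrm{diag}(\lambda_1,\ldots,\lambda_{2\nu+\delta})$. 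Applying it next to $[e_i+e_j]$ and using that the rows of the nonsingular matrix $T_2$ are linearly independent will force $\lambda_i=\lambda_j$ for all $i,j$, so $T_1=\lambda T_2$ for a single scalar $\lambda\in\mathbb{F}_q^{*}$. Substituting into $T_1S\,^{t}\!T_1=S=T_2S\,^{t}\!T_2$ then yields $\lambda^{2}=1$, hence $\lambda=\pm 1$.

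The routine computations in (1) and the ``if'' half of (2) are essentially forced by the definitions. The step that genuinely carries the proposition is the propagation argument in the ``only if'' direction of (2): turning the rowwise equalities $(T_1)_i=\lambda_i(T_2)_i$ into the single scalar $T_1=\lambda T_2$. That is where the linear independence of the rows of $T_2$, together with testing on the off-diagonal vectors $e_i+e_j$, does the actual work; after that, the orthogonality relation pins $\lambda$ down to $\pm 1$ at once.
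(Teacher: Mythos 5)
Your proof is correct and follows essentially the same route as the paper: part (1) is the identical computation $(AT)S\,^{t}\!(BT)=A(TS\,^{t}\!T)\,^{t}\!B=AS\,^{t}\!B$, and part (2) reduces, as in the paper, to the action on $1$-dimensional vertices (which Proposition~\ref{2.3} guarantees are permuted among themselves). The only difference is that the paper outsources the final step of (2) to the analogous argument in the orthogonal-graph paper of Gu and Wan, whereas you carry it out explicitly by testing on $[e_i]$ and $[e_i+e_j]$ and then using $T_1S\,^{t}\!T_1=T_2S\,^{t}\!T_2=S$ to pin down $\lambda=\pm1$; this is a complete and self-contained version of the same idea.
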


\begin{proof} (1) Let $T\in O_{2\nu+\delta}\big(\mathbb{F}_q\big)$. Then $T$ is nonsingular and $\sigma_T$ is a bijection. For any $A,B\in V\big(Oi\big(2\nu+\delta, q\big)\big)$, we know that $AS\, ^{t}\!B=ATS\, ^{t}\!(AT)$, then $A\text{---}B$ if and only if $\sigma_T(A)\text{---}\sigma_T(B)$. Thus we conclude that $\sigma_T\in {\rm Aut}\big(Oi\big(2\nu+\delta,q\big)\big)$.

(2) First of all, if $T_1=\pm T_2\in O_{2\nu+\delta}\big(\mathbb{F}_q\big)$, then it follows that $\sigma_{T_1}= \sigma_{T_2}$. Conversely, suppose that $\sigma_{T_1}=\sigma_{T_2}$. Then we need only consider them acting on $Oi\big(2\nu+\delta,q\big)$. According to Proposition \ref{2.3}, we deduce that $\sigma_{T_i}\big(Oi\big(2\nu+\delta,1,q\big)\big)=Oi\big(2\nu+\delta,1,q\big)$ for $i=1,2$. The following proof is similar to that of \cite[Proposition 3.1]{zgzw}.
\end{proof}

Note that every orthogonal matrix of $O_{2\nu+\delta}(\mathbb{F}_q)$ can induce an automorphism of $Oi\big(2\nu+\delta,q\big)$ and two different orthogonal matrices $T_1$ and $T_2$ induce the same automorphism of $Oi\big(2\nu+\delta,q\big)$ if and only if $T_1=\pm T_2$. Thus $PO_{2\nu+\delta}(\mathbb{F}_q)$ can be regarded as a subgroup of ${\rm Aut}\big(Oi\big(2\nu+\delta,q\big)\big)$.

\begin{Proposition}  \label{2.5} Let $Oi\big(2\nu+\delta,1,q\big)$ be an induced subgraph of $Oi\big(2\nu+\delta,q\big)$ and $\sigma\in {\rm Aut}\big(Oi\big(2\nu+\delta,q\big)\big)$. Then $X$ and $\sigma(X)$ are the same type orthogonal subspace for $X\in V\big(Oi\big(2\nu+\delta,1,q\big)\big)$.
\end{Proposition}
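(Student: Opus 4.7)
The plan is to exploit Proposition~\ref{2.3}, which implies $\sigma$ restricts to an automorphism of the induced subgraph $Oi(2\nu+\delta,1,q)$, and then to exhibit numerical invariants inside this induced subgraph that separate the three types $\mathcal{M}(1,0,0;2\nu+\delta,\Delta)$, $\mathcal{M}(1,1,0,1;2\nu+\delta,\Delta)$, $\mathcal{M}(1,1,0,z;2\nu+\delta,\Delta)$. Any such invariant is automatically $\sigma$-invariant, so proving $\sigma$ preserves type reduces to producing a layered set of invariants that together cut out each orbit.

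The first step is a direct degree computation. The neighbors of $X\in V(Oi(2\nu+\delta,1,q))$ inside $Oi(2\nu+\delta,1,q)$ are precisely the $1$-dimensional subspaces of $X^\perp$ different from $X$ itself, so
\begin{equation*}
\deg_{Oi(2\nu+\delta,1,q)}(X)=\frac{q^{2\nu+\delta-1}-1}{q-1}-\varepsilon(X),
\end{equation*}
where $\varepsilon(X)=1$ if $X\subseteq X^\perp$ (i.e.\ $X$ is isotropic) and $\varepsilon(X)=0$ otherwise. The two values differ, so $\sigma$ preserves the partition $V(Oi(2\nu+\delta,1,q))=I\sqcup N$ with $I=\mathcal{M}(1,0,0;2\nu+\delta,\Delta)$ the isotropic lines and $N$ the union of the two non-isotropic classes.

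The second step splits $N$ according to the square class of $XS\,{}^tX$. For non-isotropic $X$, write $a=XS\,{}^tX\in\mathbb{F}_q^*$; the type is determined by $a\pmod{(\mathbb{F}_q^*)^2}$. The orthogonal complement $X^\perp$ is a nondegenerate $(2\nu+\delta-1)$-dimensional orthogonal space whose discriminant lies in the class $\det(S)/a\pmod{(\mathbb{F}_q^*)^2}$, which depends only on the type of $X$. Because $I$ is $\sigma$-invariant, the integer
\begin{equation*}
\iota(X):=\bigl|\{[\beta]\in I:[\beta]\subseteq X^\perp\}\bigr|=N(1,0,0;2\nu+\delta-1,\Delta'_X)
\end{equation*}
is also $\sigma$-invariant. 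The enumeration formulas of \cite[Thm.~6.26]{zwG} show that the two possible discriminant classes of $\Delta'_X$ produce different values of $N(1,0,0;\ldots)$ whenever $2\nu+\delta-1$ is even, that is, when $\delta=1$. In this regime $\iota$ already distinguishes $(1,1,0,1)$ from $(1,1,0,z)$ and the proof is complete.

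The main obstacle is the remaining case $\delta\in\{0,2\}$, in which $X^\perp$ has odd dimension and the two possible discriminant classes yield the same isotropic count, so $\iota$ fails to separate the two non-isotropic types. To close this gap I would introduce a finer invariant: for each non-isotropic $X$ one considers the subgraph structure induced on the non-isotropic neighbors of $X$ inside $X^\perp$ together with cross-counts between those neighbors and the unique degree-one neighbor $X^\perp$ of $X$ in the full graph $Oi(2\nu+\delta,q)$, all of which are visible to $\sigma$ through the degree filtration inherited from $Oi(2\nu+\delta,q)$. Controlling this refinement via the formulas of \cite{zwG} applied to the restricted form on $X^\perp$ is the technical heart of the proof; once such an invariant is produced, one concludes that $\sigma(X)$ and $X$ lie in the same one of the three types $\mathcal{M}(1,0,0;2\nu+\delta,\Delta)$, $\mathcal{M}(1,1,0,1;2\nu+\delta,\Delta)$, $\mathcal{M}(1,1,0,z;2\nu+\delta,\Delta)$, which is the statement of the proposition.
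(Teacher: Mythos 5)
Your reduction of the problem to a search for $\sigma$-invariant counting functions is sound, and your first two steps are correct: the degree computation inside $Oi\big(2\nu+\delta,1,q\big)$ does isolate the isotropic class $\mathcal{M}(1,0,0;2\nu+\delta,\Delta)$ as a $\sigma$-invariant set, and for $\delta=1$ the number of isotropic lines contained in $X^{\perp}$ is a legitimate invariant (it is built only from the already-invariant set $I$ and from adjacency) which does separate the two anisotropic types, since $X^{\perp}$ is then even-dimensional and the isotropic counts for the two discriminant classes differ. This part is actually cleaner than the paper's own argument, which distinguishes the two anisotropic types by counting the neighbours of $X$ \emph{of type} $(1,1,0,1)$ --- a quantity that is only known to be $\sigma$-invariant once one already knows that $\sigma$ preserves the set of type-$(1,1,0,1)$ vertices, i.e.\ once one already has the conclusion.

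However, the cases $\delta\in\{0,2\}$, which you defer to an unspecified ``finer invariant,'' are a genuine gap, and I do not believe it can be closed along these lines. When $2\nu+\delta$ is even, $\det(zS)$ and $\det(S)$ lie in the same square class, so $zS$ is cogredient to $S$: there is $P\in GL_{2\nu+\delta}(\mathbb{F}_q)$ with $PS\,{}^t\!P=zS$ (for $\delta=0$ one may take $P=\mathrm{diag}(zI^{(\nu)},I^{(\nu)})$). The map $A\mapsto AP$ multiplies every inner product by $z$, hence preserves adjacency and is an automorphism of $Oi\big(2\nu+\delta,q\big)$; but it sends a line $[v]$ with $vS\,{}^t\!v$ a square to a line whose inner product is a non-square, i.e.\ it interchanges $\mathcal{M}(1,1,0,1;2\nu+\delta,\Delta)$ and $\mathcal{M}(1,1,0,z;2\nu+\delta,\Delta)$. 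Consequently no graph-theoretic invariant of any kind can separate these two classes when $\delta=0$ or $2$, so the ``technical heart'' you postpone cannot be supplied; the difficulty lies with the statement itself for these values of $\delta$, and the paper's neighbour-count argument for them does not go through either. Your proposal should therefore be regarded as a correct proof only of the $\delta=1$ case, together with the invariance of the isotropic class in general.
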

\begin{proof} In order to prove the theorem, we need only consider three cases: $\delta=0,1$ and $2$. Without loss of generality, we need only consider the case of $\delta=0$. Other cases are similar to prove. Let $\delta=0$.

Let $X\in V\big(Oi\big(2\nu+\delta,1,q\big)\big)$. Then the type of $X$ is one of $(1,0,0)$, $(1,1,0,1)$ and $(1,1,0,z)$. If the type of $X$ is $(1,0,0)$, then it is easy to check that the type of $\sigma(X)$ is also $(1,0,0)$. By \cite[Theorem 6.4]{zgzw} and Proposition \ref{2.4}, we know that the sets $\mathcal{M}(1,0,0;2\nu+\delta,\Delta)$, $\mathcal{M}(1,1,0,1;2\nu+\delta,\Delta)$ and $\mathcal{M}(1,1,0,z;2\nu+\delta,\Delta)$ are three orbits of $Oi\big(2\nu+\delta,q\big)$ under the action of $PO_{2\nu+\delta,\Delta}(\mathbb{F}_q)$.

If the type of $X$ is $(1,1,0,1)$, then by \cite[Corollary 6.6]{zgzw} we know that the type of $X^{\perp}$ is
\begin{displaymath}
\left\{ \begin{array}{ll}
(2\nu-1,2\nu-1,\nu-1,1), & \text{if}~-1\in\mathbb{F}_q^{*2},\\
(2\nu-1,2\nu-1,\nu-1,z), & \text{if}~-1\notin\mathbb{F}_q^{*2}.
\end{array} \right.
\end{displaymath}
If the type of $X$ is $(1,1,0,z)$, then by \cite[Corollary 6.6]{zgzw} we know that the type of $X^{\perp}$ is
\begin{displaymath}
\left\{ \begin{array}{ll}
(2\nu-1,2\nu-1,\nu-1,z), & \text{if}~-1\in\mathbb{F}_q^{*2},\\
(2\nu-1,2\nu-1,\nu-1,1), & \text{if}~-1\notin\mathbb{F}_q^{*2}.
\end{array} \right.
\end{displaymath}

Let $$\mathcal{X}=\{Y\in V\big(Oi\big(2\nu,q\big)\big):YS_{2\nu}{^t\!X}=0 {\rm\ and\ the\ type\ of\ Y\ is\ (1,1,0,1)}\}.$$

If the type of $X$ is $(1,1,0,1)$, then by \cite[Theorem 6.33]{zgzw},  we have
\begin{displaymath}
|\mathcal{X}|=\left\{ \begin{array}{ll}
N(1,1,0,1;2\nu-1,2\nu-1,\nu-1,1;2\nu), & \text{if}~-1\in\mathbb{F}_q^{*2},\\
N(1,1,0,1;2\nu-1,2\nu-1,\nu-1,z;2\nu), & \text{if}~-1\notin\mathbb{F}_q^{*2},
\end{array} \right.
\end{displaymath}
\begin{displaymath}
=\left\{ \begin{array}{ll}
N(1,1,0,1;2\nu-1,1), & \text{if}~-1\in\mathbb{F}_q^{*2},\\
N(1,1,0,1;2\nu-1,z), & \text{if}~-1\notin\mathbb{F}_q^{*2}.
\end{array} \right.
\end{displaymath}

If the type of $X$ is $(1,1,0,z)$, then by \cite[Theorem 6.33]{zgzw},  we have
\begin{displaymath}
|\mathcal{X}|=\left\{ \begin{array}{ll}
N(1,1,0,1;2\nu-1,2\nu-1,\nu-1,z;2\nu), & \text{if}~-1\in\mathbb{F}_q^{*2},\\
N(1,1,0,1;2\nu-1,2\nu-1,\nu-1,1;2\nu), & \text{if}~-1\notin\mathbb{F}_q^{*2},
\end{array} \right.
\end{displaymath}
\begin{displaymath}
=\left\{ \begin{array}{ll}
N(1,1,0,1;2\nu-1,z), & \text{if}~-1\in\mathbb{F}_q^{*2},\\
N(1,1,0,1;2\nu-1,1), & \text{if}~-1\notin\mathbb{F}_q^{*2}.
\end{array} \right.
\end{displaymath}

If the type of $X_1$ is $(1,1,0,1)$ and the type of $X_2$ is $(1,1,0,z)$, then by \cite[Theorem 6.26]{zgzw}, we have $|\mathcal{X}_1|\neq|\mathcal{X}_2|$. Thus by Proposition \ref{2.3}, we know that $X$ and $\sigma(X)$ are the same type orthogonal subspace for $X\in V\big(Oi\big(2\nu+\delta,1,q\big)\big)$.
\end{proof}

\section{\boldmath Automorphism groups of orthogonal inner graphs}

In order to determine automorphism groups ${\rm Aut}\big(Oi\big(2\nu+\delta,q\big)\big)$ of orthogonal inner graphs $Oi(2\nu+\delta, \mathbb{F}_q)$, we need only consider three cases of $\delta=0,1$ and $2$.

Firstly, we will discuss the case of $\delta=0$. Let $S=S_{2\nu}$. In what follows, we write $f_i$ for $e_{\nu+i},$ $1\leq i\leq \nu$. 
Then
$$e_i{S_{2\nu}}{^t\!f_i}=1,e_i{S_{2\nu}}{^t\!e_j}=f_i{S_{2\nu}}{^t\!f_j}=0\ {\rm for}\ 1\leq i,j\leq\nu$$
and
$$e_i{S_{2\nu}}{^t\!f_j}=0\ {\rm for}\ i\neq j,\ 1\leq i,j\leq\nu.$$

Let $\varphi_{2\nu}$ be the natural action of ${\rm Aut}(\mathbb{F}_q)$ on the group $\mathbb{F}_q^{*2}\times\mathbb{F}_q^*\times \cdots\times\mathbb{F}_q^*$ defined by
\begin{eqnarray*}
\varphi_{2\nu}:\big(\mathbb{F}_q^{*2}\times\mathbb{F}_q^*\times\cdots\times \mathbb{F}_q^*\big)\times{\rm Aut}(\mathbb{F}_q) &\longrightarrow& \mathbb{F}_q^{*2}\times\mathbb{F}_q^*\times \cdots\times\mathbb{F}_q^*\\
((k_1,k_2,\ldots,k_{\nu}),\pi) &\longrightarrow& (\pi(k_1),\pi(k_2),\ldots,\pi(k_{\nu}))
\end{eqnarray*}
for $\pi\in{\rm Aut}(\mathbb{F}_q)$, $k_1\in\mathbb{F}_q^{*2}$ and $k_2,\ldots,k_{\nu}\in\mathbb{F}_q^*$.
The \emph{semidirect product} of $\mathbb{F}_q^{*2}\times\mathbb{F}_q^*\times \cdots\times\mathbb{F}_q^*$ by ${\rm Aut}(\mathbb{F}_q)$ with respect to $\varphi_{2\nu}$, denoted by $(\mathbb{F}_q^{*2}\times\mathbb{F}_q^*\times \cdots\times\mathbb{F}_q^*)\rtimes_{\varphi_{2\nu}} {\rm Aut}(\mathbb{F}_q)$, is the group consisting of all elements of the form $(k_1,k_2\ldots, k_{\nu},\pi)$, with the multiplication defined by
$$(k_1,k_2,\ldots,k_{\nu},\pi)(k_1^{'},k_2^{'},\ldots,k_{\nu}^{'},\pi^{'})=(k_1\pi(k_1^{'}), k_2\pi(k_2^{'}),\ldots, k_{\nu}\pi(k_{\nu}^{'}),\pi\pi^{'}).$$
Define maps:
\begin{eqnarray*}
\sigma_{\pi}:V\big(Oi\big(2\nu+\delta,q\big)\big) & \longrightarrow & V\big(Oi\big(2\nu+\delta,q\big)\big)\\
\big(a_{ij}\big)_{m\times(2\nu+\delta)}&\longrightarrow&\big(\pi(a_{ij})\big)_{m\times (2\nu+\delta)}
\end{eqnarray*}
and
\begin{eqnarray*}
\sigma_{(k_1,k_2,\ldots,k_\nu,\pi)}:V\big(Oi\big(2\nu,q\big)\big) &\longrightarrow& V\big(Oi\big(2\nu,q\big)\big)\\
A &\longrightarrow& \sigma_\pi(A)\cdot{\rm diag}(k_1,k_2,\ldots,k_{\nu}, k_1^{-1},k_2^{-1},\ldots,k_{\nu}^{-1}),
\end{eqnarray*}
where $\pi\in{\rm Aut}(\mathbb{F}_q)$, $k_1\in\mathbb{F}_q^{*2}$ and $k_2\ldots,k_{\nu}\in \mathbb{F}_q^*$.
It is easy to check that $\sigma_{\pi}\in {\rm Aut}\big(Oi\big(2\nu,q\big)\big)$. Moreover, since
${\rm diag}(k_1,k_2,\ldots,k_{\nu},k_1^{-1},k_2^{-1}\ldots,k_{\nu}^{-1}) S_{2\nu}\,{^t({\rm diag}(k_1,k_2,\ldots,k_{\nu}, k_1^{-1},k_2^{-1},\ldots,k_{\nu}^{-1}))}=S_{2\nu},$
we have ${\rm diag}(k_1,k_2,\ldots,k_{\nu},k_1^{-1},\ldots,k_{\nu}^{-1})\in O_{2\nu}(\mathbb{F}_q).$ So $\sigma_{(k_1,k_2,\ldots,k_{\nu},\pi)}\in {\rm Aut}\big(Oi\big(2\nu,q\big)\big).$

Before considering the problem of automorphism groups of $Oi\big(2\nu,q\big)$, let$'$s look at a simple fact.

\begin{Example} \label{e1} Let $\mathbb{F}_q$ be a field. Then $$V\big(Oi\big(2,q\big)\big)=\{[(1,x)]:x\in \mathbb{F}_q\}\bigcup \{[(0,1)]\}$$
and
$$E\big(Oi\big(2,q\big)\big)=\{[(1,x)]\text{---}[(1,y)]: xy=-1,x,y\in \mathbb{F}_q\}\bigcup\{[(1,0)]\text{---}[(1,0)], [(0,1)] \text{---}[(0,1)]\}.$$
Clearly, $Oi\big(2,q\big)$ is a disconnected graph.
\end{Example}


\begin{Theorem}\label{ot1}
Let $\nu\geq2$ and $E_{2\nu}$ be the subgroup of ${\rm Aut}\big(Oi\big(2\nu,q\big)\big)$ defined as follows:
$$E_{2\nu}=\big\{ \sigma\in {\rm Aut}\big(Oi\big(2\nu,q\big)\big): \sigma\big([e_{i}]\big)=[e_{i}]~and~\sigma\big([f_{i}]\big)=[f_{i}] ~\text{for}~1\leq i\leq\nu\big\}.$$
Then ${\rm Aut}\big(Oi\big(2\nu,q\big)\big)=PO_{2\nu}\big(\mathbb{F}_q\big)\cdot E_{2\nu}$. Moreover, let
\begin{eqnarray*}
h:\left.{\big((\mathbb{F}_q^{*2}\times\mathbb{F}_q^*\times\cdots\times \mathbb{F}_q^*)\rtimes_{\varphi_{2\nu}}{\rm Aut}(\mathbb{F}_q)\big)}\middle/ K_{2\nu}\right. &\longrightarrow& E_{2\nu}\\
(k_1,k_2,\ldots,k_\nu,\pi)K_{2\nu} &\longrightarrow& \sigma_{(k_1,k_2,\ldots,k_\nu,\pi)},
\end{eqnarray*}
where $1_{\mathbb{F}_q}$ is an identity element of ${\rm Aut}(\mathbb{F}_q)$ and
\begin{displaymath}
K_{2\nu}=\left\{ \begin{array}{ll}
\{(c,c,\ldots,c,1_{\mathbb{F}_q}):c=\pm1\}, &\ {\rm if}\ -1\in\mathbb{F}_q^{*2},\\
\{(1,1,\ldots,1,1_{\mathbb{F}_q}) \}, &\ {\rm if}\ -1\notin\mathbb{F}_q^{*2}.
\end{array} \right.
\end{displaymath}
Then $h$ is a group isomorphism from ~$\left.{\big((\mathbb{F}_q^{*2}\times\mathbb{F}_q^*\times\cdots\times \mathbb{F}_q^*)\rtimes_{\varphi_{2\nu}}{\rm Aut}(\mathbb{F}_q)\big)}\middle/ K_{2\nu}\right.$ to $E_{2\nu}$.
\end{Theorem}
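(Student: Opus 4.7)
My plan is to split the theorem into two parts: (A) the set-theoretic decomposition ${\rm Aut}\big(Oi\big(2\nu,q\big)\big) = PO_{2\nu}(\mathbb{F}_q) \cdot E_{2\nu}$, and (B) the group isomorphism $h$.

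For (A), the inclusion $\supseteq$ is immediate from Proposition~\ref{2.4} and the definition of $E_{2\nu}$. For the reverse, I take $\sigma \in {\rm Aut}\big(Oi\big(2\nu,q\big)\big)$ and apply Proposition~\ref{2.5} to conclude that each $\sigma([e_i])$ and $\sigma([f_i])$ has type $(1,0,0)$, hence is an isotropic line. Since $\sigma$ preserves adjacency, the images inherit the orthogonality pattern of the original frame: the $\sigma([e_i])$ are mutually (and self-)orthogonal, the $\sigma([f_i])$ are mutually (and self-)orthogonal, and $\sigma([e_i]) \perp \sigma([f_j])$ if and only if $i \neq j$. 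Picking representatives $e_i' \in \sigma([e_i])$ and rescaling $f_i' \in \sigma([f_i])$ so that $e_i' S\,{}^t\! f_i' = 1$ produces a basis of $\mathbb{F}_q^{(2\nu)}$ with Gram matrix $S_{2\nu}$. The linear map $T$ sending $e_i \mapsto e_i'$, $f_i \mapsto f_i'$ therefore lies in $O_{2\nu}(\mathbb{F}_q)$, and $\sigma_T^{-1}\sigma$ fixes every $[e_i]$ and $[f_i]$, placing it in $E_{2\nu}$.

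For (B), I would verify four claims in order. Well-definedness: $\sigma_\pi$ fixes each $[e_i]$ and $[f_i]$ because these vectors have $0/1$ entries, and right multiplication by ${\rm diag}(k_1,\ldots,k_\nu,k_1^{-1},\ldots,k_\nu^{-1})$ rescales each basis vector, so preserves projective lines; moreover the generators of $K_{2\nu}$ correspond to $\pm I \in O_{2\nu}(\mathbb{F}_q)$, inducing the identity on $Oi\big(2\nu,q\big)$. Multiplicativity follows from the commutation $\sigma_\pi \cdot D_{(k_1',\ldots,k_\nu')} = D_{(\pi(k_1'),\ldots,\pi(k_\nu'))} \cdot \sigma_\pi$, where $D_{(\cdots)}$ denotes the associated diagonal matrix, exactly matching the semidirect product rule for $\varphi_{2\nu}$. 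For the kernel: if $\sigma_{(k_1,\ldots,k_\nu,\pi)} = {\rm id}$, evaluating on lines $[\alpha e_1 + \beta e_2]$ for varying $\alpha,\beta \in \mathbb{F}_q^*$ forces $\pi = 1_{\mathbb{F}_q}$ and $k_1 = k_2 = \cdots = k_\nu$; imposing $k_1 \in \mathbb{F}_q^{*2}$ then yields precisely the two cases of $K_{2\nu}$, according to whether $-1$ is a square.

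The principal obstacle is the fourth step, surjectivity of $h$. Given $\sigma \in E_{2\nu}$, I would reconstruct $(k_1,\ldots,k_\nu,\pi)$ by tracking $\sigma$ on non-frame isotropic lines. For each $\lambda \in \mathbb{F}_q^*$, the line $[e_1 + \lambda e_2]$ is isotropic, orthogonal to every $[e_i]$ and to $[f_j]$ for $j \geq 3$, but not to $[f_1]$ or $[f_2]$; these adjacency constraints combined with Proposition~\ref{2.5} force $\sigma([e_1 + \lambda e_2]) = [e_1 + \mu(\lambda) e_2]$ for some map $\mu : \mathbb{F}_q^* \to \mathbb{F}_q^*$. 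Comparing $\sigma$ on $[e_1 + (\lambda + \lambda') e_2]$ and on $[e_1 + \lambda \lambda' e_2]$ through further adjacency relations with carefully chosen lines inside $[e_1,e_2,f_1,f_2]$ forces $\mu$ to be additive and multiplicative, hence a field automorphism $\pi \in {\rm Aut}(\mathbb{F}_q)$. Repeating the analysis on the pairs $(e_1,e_i)$ and $(e_1,f_i)$ extracts the scalars $k_i$, with $k_1$ normalized into $\mathbb{F}_q^{*2}$ by absorbing an available square factor. The whole argument parallels~\cite[Proposition~3.1]{zgzw}, which handles the analogous situation for the orthogonal graph.
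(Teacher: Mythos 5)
Your strategy is essentially the paper's. For the decomposition ${\rm Aut}\big(Oi(2\nu,q)\big)=PO_{2\nu}(\mathbb{F}_q)\cdot E_{2\nu}$ you argue exactly as the paper does: the images of the frame lines are isotropic lines inheriting the hyperbolic orthogonality pattern, a rescaling makes the Gram matrices agree, and Witt extension (the paper quotes Wan's Lemma 6.8) produces $T\in O_{2\nu}(\mathbb{F}_q)$ with $\sigma_T^{-1}\sigma\in E_{2\nu}$. For the isomorphism $h$, both you and the paper reduce everything to surjectivity of the homomorphism $h'$, recover the vanishing pattern of coordinates from adjacency with the frame, define coordinatewise permutations $\pi_i$ of $\mathbb{F}_q$, and force additivity and multiplicativity of the common normalization $\pi$ by adjacency relations among lines inside $[e_1,e_2,f_1,f_2]$; this is the same computation the paper carries out explicitly for $\nu=2$.

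The one place where you genuinely diverge from the paper --- and where your argument has a gap --- is the final assertion that $k_1$ can be ``normalized into $\mathbb{F}_q^{*2}$ by absorbing an available square factor.'' There is no such factor to absorb. Once $\sigma$ is known to act as $[v]\mapsto[\sigma_\pi(v)D]$ with $D$ diagonal, the requirement that $D$ be proportional to ${\rm diag}(k_1,\dots,k_\nu,k_1^{-1},\dots,k_\nu^{-1})$ determines the tuple $(k_1,\dots,k_\nu)$ up to the single global sign $c=\pm1$ (this is precisely why $K_{2\nu}$ has order at most $2$); so when $-1\in\mathbb{F}_q^{*2}$ the square class of $k_1$ is an invariant of $\sigma$ and cannot be adjusted by any rescaling. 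The membership $k_1\in\mathbb{F}_q^{*2}$ must therefore be \emph{proved}, and this is the only point at which the paper invokes Proposition \ref{2.5} inside the surjectivity argument: applying type-preservation to the anisotropic line $[(1,0,1,0)]=[e_1+f_1]$ forces the product of the scalars attached to the $e_1$- and $f_1$-coordinates to be a square, which is what constrains $k_1$. Your sketch uses Proposition \ref{2.5} only to locate images of lines, never to control this square class, so as written your surjectivity argument only lands in the larger group in which $k_1$ ranges over all of $\mathbb{F}_q^*$. You should replace the ``absorb a square factor'' remark with the explicit type computation on $[e_1+f_1]$; and be warned that this is also the most delicate step of the paper's own proof (the paper's choice $k_1=\pi_2(1)\pi_4(1)$ does not obviously match the scalar forced by its displayed formula for $\sigma$, so this step deserves especially careful verification rather than imitation).
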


\begin{proof}  Let $\tau\in {\rm Aut}\big(Oi\big(2\nu,q\big)\big)$. By Proposition \ref{2.3}, we can suppose that $\tau\big([e_{i}]\big)=[\alpha_{i}]$ and $\tau\big([f_{i}]\big) =[\beta_{i}]$ for $1\leq i\leq\nu$. Then
$$\alpha_i{S_{2\nu}}{^t\!\!\beta}_i\neq0,\alpha_iS_{2\nu}{^t\!\!\alpha}_j= \beta_iS_{2\nu}{^t\!\!\beta}_j=0\ {\rm for}\ 1\leq i,j\leq\nu$$
and
$$\alpha_iS_{2\nu}{^t\!\!\beta}_j=0\ {\rm for}\ i\neq j,\,1\leq i,j\leq\nu.$$
Let $A=[e_1,\ldots,e_{\nu},f_1,\ldots,f_{\nu}]$ and $A^{'}=[\alpha_{1}, \ldots, \alpha_{\nu},\beta_{1}, \ldots, \beta_{\nu}]$. Since $A^{'} S\, ^{t}\!A^{'}$ is a $2\nu \times 2\nu$ nonsingular symmetric matrix, we can choose $\alpha_i,\beta_i, 1\leq i\leq\nu,$ such that $\alpha_i S\, ^{t}\!\beta_i=1$. Then $AS\, ^{t}\!A=A^{'}S\, ^{t}\!A^{'}$. By \cite[Lemma 6.8]{zwG}, there exists $T\in O_{2\nu}(\mathbb{F}_q)$ such that $A=A^{'}T$. Let $\tau_1= \sigma_T\tau$. Then $\tau_1([e_i]) =[e_i]$ and $\tau_1([f_i]) =[f_i]$ for $1\leq i\leq\nu$. So $\tau_1\in E_{2\nu}$ and $\tau=\sigma_T^{-1}\tau_1\in PO_{2\nu}(\mathbb{F}_q) \cdot E_{2\nu}$. Thus ${\rm Aut}\big(Oi\big(2\nu,q\big)\big)= PO_{2\nu}(\mathbb{F}_q)\cdot E_{2\nu}$.
\vskip 0.15cm

Let $k_1\in\mathbb{F}_q^{*2},~k_2,\ldots,k_{\nu}\in\mathbb{F}_q^*$ and $\pi\in{\rm Aut}(\mathbb{F}_q)$. Then it is easy to check that
$$\sigma_{(k_1,k_2,\ldots,k_{\nu},\pi)}([e_i])=[e_i]\ {\rm and}\ \sigma_{(k_1,k_2,\ldots,k_{\nu},\pi)}([f_i])=[f_i]$$
for $1\leq i\leq\nu,$ and so $\sigma_{(k_1,k_2\ldots,k_{\nu},\pi)}\in E_{2\nu}$. Let
\begin{eqnarray*}
h^{'}:(\mathbb{F}_q^{*2}\times\mathbb{F}_q^*\times\cdots\times \mathbb{F}_q^*)\rtimes_{\varphi_{2\nu}} {\rm Aut}(\mathbb{F}_q) & \longrightarrow & E_{2\nu}\\
(k_1,k_2,\ldots,k_\nu,\pi) & \longrightarrow & \sigma_{(k_1,\ldots,k_\nu,\pi)}.
\end{eqnarray*}
Then by the definition of $\sigma_{(k_1,k_2,\ldots,k_\nu,\pi)}$ and Proposition\ref{2.4}, it is easily seen that $h^{'}$ is a group homomorphism and the kernel of $h^{'}$ is $K_{2\nu}$. Hence in order to show that $h$ is a group isomorphism from $\left.{\big((\mathbb{F}_q^{*2}\times\mathbb{F}_q^*\times\cdots\times \mathbb{F}_q^*)\rtimes_{\varphi_{2\nu}}{\rm Aut}(\mathbb{F}_q)\big)}\middle/ K_{2\nu}\right.$ to $E_{2\nu}$, it suffices to show that for any $\sigma\in E_{2\nu+1}$, there exist $k_1\in\mathbb{F}_q^{*2},$ $k_2,\ldots,k_{\nu}\in\mathbb{F}_q^*$ and $\pi\in{\rm Aut}(\mathbb{F}_q)$ such that $\sigma=\sigma_{(k_1,k_2,\ldots,k_\nu,\pi)}.$
\vskip 0.15cm

Let $\sigma\in E_{2\nu}$. And then we will prove that there exist ~$k_1\in\mathbb{F}_q^{*2},$ $k_2,\ldots,k_{\nu}\in\mathbb{F}_q^*$ and $\pi\in{\rm Aut}(\mathbb{F}_q)$ such that
$$\sigma=\sigma_{(k_1,k_2,\ldots,k_\nu,\pi)}.$$

Let $[(a_1,\ldots,a_{2\nu})]\in V\big(Oi\big(2\nu,q\big)\big)$ and suppose that $\sigma([(a_1,\ldots,a_{2\nu})])=[(b_1,\ldots,b_{2\nu})]$. Then
\begin{center}
$a_i=0$ if and only if $[(a_1,\ldots,a_{2\nu})]$---$[f_i]$
\end{center}
and
\begin{center}
 $a_{\nu+i}=0$ if and only if $[(a_1,\ldots,a_{2\nu})]$---$[e_i]$
\end{center}
for $1\leq i\leq\nu$. Since $\sigma([e_i])=[e_i]$ and $\sigma([f_i]) =[f_i]$, $1\leq i\leq\nu$, we have
$$a_k=0\text{ if and only if }b_k=0 \text{ for }1\leq k\leq2\nu.$$
Moreover, $[(a_1,\ldots,a_{2\nu})]\in V\big(Oi\big(2\nu,q\big)\big)$ can be written uniquely as $[(0,\ldots,0,1,a_{i+1}^{'},\ldots,a_{2\nu}^{'})]$ if $a_1=\cdots=a_{i-1}=0$ and $a_i\neq0$. Therefore $\sigma([(0,\ldots,0,1,a_{i+1}^{'},\ldots,a_{2\nu}^{'})])$ can be written uniquely as $[(0,\ldots,0,1,b_{i+1},\ldots,b_{2\nu})].$
\vskip 0.1cm

Let $1\leq k\leq\nu$. For $[f_1+ae_k],[f_1+bf_k]\in V\big(Oi\big(2\nu,q\big)\big)$, there exist $a^{'},b^{'}\in\mathbb{F}_q$ such that $\sigma\big([e_1+ae_k]\big)=[e_1+ a^{'}e_k]$ and $\sigma\big([e_1+ bf_k]\big)=[e_1+b^{'}e_k]$. Thus we can define permutations $\pi_i$ of $\mathbb{F}_q$ with $\pi_i(0)=0$ such that
$$\sigma\big([e_1+ a_ie_i]\big)=[e_1+\pi_i(a_i)e_i]\,\ {\rm and}\,\ \sigma\big([e_1+ a_{\nu+i}f_i]\big)=[e_1+\pi_{\nu+i}(a_{\nu+i})f_i],$$
where $2\leq i\leq\nu.$
\vskip 0.15cm

In the following proof, we need only consider two cases: $\nu=2$ and $\nu>2$.

(1) $\nu=2$. Clearly, $V\big(Oi\big(4,q\big)\big)$ consists of four types of vertices: $\{[(1,a_2,a_3,a_4)]: a_2,a_3,a_4\in \mathbb{F}_q\}, \{[(0,1,\linebreak[1]a_3,a_4)]: a_3,a_4\in \mathbb{F}_q\}, \{[(0,0,1,a_4)]: a_4\in \mathbb{F}_q\}$ and $\{[(0,0,0,1)]\}$. If $\sigma([(x_1,x_2,x_3,x_4)])=[(y_1,y_2,y_3,\linebreak[1]y_4)]$ and $x_i=0$, then we get $y_i=0$ for $1\leq i\leq4$. Thus without restriction of generality we can assume that $a_i\neq0$ for $1\leq i\leq4$. Now we first determine that $\sigma([(0,0,1,a_4)]),\sigma([(0,1,a_3,a_4)])$ and $\sigma([(1,a_2,a_3,a_4)])$ for $a_2,a_3,\linebreak[1]a_4\in \mathbb{F}_q$.

Suppose that $\sigma([(0,0,1,a_4)])=[(0,0,1,a_4^{'})]$. Since $[(0,0,1,a_4)] \text{---}[(1,-a_4^{-1},0,0)]$, $[(0,0,1,a_4^{'})] \text{---}\allowbreak[(1,\pi_2(-a_4^{-1}),0,0)]$, i.e., $a_4^{'}= -\pi_2(-a_4^{-1})^{-1}$.
Thus $\sigma([(0,0,1,a_4)])=[(0,0,1,-\pi_2(-a_4^{-1})^{-1})]$.

Suppose that $\sigma([(0,1,a_3,a_4)])=[(0,1,a_3^{'},a_4^{'})]$. Since $[(0,1,a_3,a_4)]\text{---}[(1,0,0,-a_3)]$, it follows that $[(0,1,a_3^{'}, a_4^{'})]\text{---}[(1,0,0,\pi_4(-a_3))]$, i.e., $a_3^{'}=-\pi_4(-a_3)$. Since $[(0,1,a_3,a_4)]\text{---}[(1, -a_3a_4^{-1},0,\linebreak[1]0)]$, we see that $[(0,1,-\pi_4(-a_3), a_4^{'})]\text{---}[(1,\pi_2(-a_3a_4^{-1}), 0,0)]$, i.e., $a_4^{'}=\pi_2(-a_3a_4^{-1})^{-1}\pi_4(-a_3)$. Thus $\sigma([(0,1,a_3,a_4)])= [(0,1,-\pi_4(-a_3),\pi_2(-a_3a_4^{-1})^{-1}\pi_4(-a_3))].$

Similarly, we can prove that
$$\sigma([(1,a_2,\linebreak[1] a_3,a_4)])= [(1,\pi_2(a_2),-\pi_2(-a_3a_4^{-1})\pi_4(a_4), \pi_4(a_4))].$$


Next we will prove that $\pi$ is an automorphism of $\mathbb{F}_q$ for $\pi=\pi_2(1)^{-1}\pi_2=\pi_4(1)^{-1}\pi_4$. Let $a,b\in \mathbb{F}_q$.

Since $[(1,a,0,a)]\text{---}[(1,1,0,-1)]$ and $[(1,a,0,a)] \text{---}[(1,-1,0,1)]$, we have $[(1,\pi_2(a), 0,\pi_4(a))] \text{---}\linebreak[1] [(1,\pi_2(1),0,\pi_4(-1))]$ and $[(1,\pi_2(a),0,\pi_4(a))] \text{---}[(1,\pi_2(-1), 0,\pi_4(1))]$, i.e., $\pi_2(a)\pi_4(-1)+ \pi_2(1)\pi_4(a)=0$ and $\pi_2(a)\pi_4(1)+\pi_2(-1)\pi_4(a)=0$. Let $a=1$, then $-\pi_2(1)=\pi_2(-1)$ and $-\pi_4(1)=\pi_4(-1)$. Thus we deduce that $\pi_2(1)^{-1}\pi_2= \pi_4(1)^{-1}\pi_4$. Set $\pi=\pi_2(1)^{-1}\pi_2=\pi_4(1)^{-1}\pi_4$.

Since $[(1,-a,0,a)]\text{---}[(1,1,0,1)]$, we have $[(1,\pi_2(-a),0,\pi_4(a))] \text{---}[(1,\pi_2(1),0, \linebreak[1]\pi_4(1))]$, i.e., $\pi_2(-a)\pi_4(1)+\pi_2(1)\pi_4(a)=0$. It follows that $\pi_2(-a)=-\pi_2(a)$ since $\pi_2(1)^{-1}\pi_2(-a)= -\pi_4(1)^{-1}\pi_4(a)=-\pi_2(1)^{-1}\pi_2(a)$. Similarly, we have $\pi_4(-a)=-\pi_4(a)$.

Since $[(1,ab,0,a)]\text{---}[(1,b,0,-1)]$, we have $[(1,\pi_2(ab), 0,\pi_4(a))]\text{---} [(1,\pi_2(b),0,-\pi_4(1))]$, i.e., $-\pi_2(ab)\pi_4(1)+\pi_4(a)\pi_2(b)=0$. Then
$\pi_2(ab)=\pi_4(1)^{-1}\pi_4(a)\pi_2(b)=\pi_2(1)^{-1}\pi_2(a)\pi_2(b).$ Thus
$$\pi(ab)=\pi_2(1)^{-1}\pi_2(ab)=\pi_2(1)^{-1}\pi_2(1)^{-1}\pi_2(a)\pi_2(b)= \pi(a)\pi(b).$$

Let $ab\neq0$. Since $[(1,a+b,0,1)]\text{---}[(0,1,ab^{-1}, -b^{-1})]$, we have $[(1,\pi_2(a+b),0, \pi_4(1))] \text{---} [(0,1,\linebreak[1]\pi_4(ab^{-1}), -\pi_4(ab^{-1})\pi_2(a)^{-1})]$, i.e., $\pi_4(ab^{-1})- \pi_2(a+b)\pi_4(ab^{-1})\pi_2(a)^{-1}+ \pi_4(1)=0$. Then
$$\pi_2(a+b)=\pi_2(a)+ \pi_2(a)\pi_4(1)\pi_4(ab^{-1})^{-1}= \pi_2(a)+ \pi_2(1)\pi(a)\pi(ab^{-1})^{-1}~~$$
$$~\,=\pi_2(a)+\pi_2(1)\pi(a)\pi(a^{-1}b)=\pi_2(a)+\pi_2(b).~~~~~~~~~~~~\:~\:\,$$
Thus
$ \pi(a+b)=\pi_2(1)^{-1}\pi_2(a+b)=\pi_2(1)^{-1}(\pi_2(a)+\pi_2(b))= \pi(a)+\pi(b).$

Since $\pi_2$ is a permutation of $\mathbb{F}_q$, it follows that $\pi$ is injective. $\pi$ is surjective since $\mathbb{F}_q$ is finite. Thus $\pi$ is an automorphism of $\mathbb{F}_q$. Furthermore, we conclude that
$$\sigma([(a_1,a_2,a_3,a_4)])= [(\pi(a_1),\pi_2(1)\pi(a_2),\pi_2(1)\pi_4(1)\pi(a_3), \pi_4(1)\pi(a_4))].$$

Lastly, we will prove that there exist $k_1\in \mathbb{F}_q^{*2},k_2\in \mathbb{F}_q^*$ and $\pi\in {\rm Aut}(\mathbb{F}_q)$ such that $\sigma=\sigma_{(k_1,k_2,\pi)}$.

By Proposition \ref{2.5}, we know that $[(1,0,1,0)]$ and $\sigma([(1,0,1,0)])$ are the same type orthogonal subspace. Then we have $\pi_2(1)\pi_4(1)\in \mathbb{F}_q^{*2}$. Let $k_1=\pi_2(1)\pi_4(1),$ $k_2=\pi_2(1)$ and $\pi=\pi_2(1)^{-1}\pi_2=\pi_4(1)^{-1}\pi_4$. Then for $\big(a_{ij}\big)_{m\times4}\in V\big(Oi\big(4,q\big)\big)$, we have
\begin{displaymath}
\sigma\big(\big(a_{ij}\big)_{m\times4}\big)=\left(
\begin{array}{cccc}
k_1\pi(a_{11})&k_2\pi(a_{12}) &k_1^{-1}\pi(a_{13}) & k_2^{-1}\pi(a_{14})\\
\vdots&\vdots &\vdots&\vdots\\
k_1\pi(a_{m1})&k_2\pi(a_{m2}) &k_1^{-1}\pi(a_{m3}) & k_2^{-1}\pi(a_{m4})
\end{array}
\right).
\end{displaymath}
So $\sigma\big(\big(a_{ij}\big)_{m\times4}\big)= \sigma_{(k_1,k_2,\pi)}\big(\big(a_{ij}\big)_{m\times4}\big)$ and therefore $\sigma=\sigma_{(k_1,k_2,\pi)}.$
\vskip 1.5mm

(2) $\nu>2$. Similar to the case of $\nu=2$, we can prove that there exist $k_1\in\mathbb{F}_q^{*2},k_2,\ldots,k_{\nu}\in \mathbb{F}_q^*$ and $\pi\in{\rm Aut}(\mathbb{F}_q)$ such that $\sigma=\sigma_{(k_1,k_2,\ldots,k_{\nu},\pi)}.$
\end{proof}

\begin{Corollary}\label{3.3} Let $\nu\geq 1$ and $\mathbb{F}_q$ be a field of characteristic $p$. Then
\begin{displaymath}
\big|{\rm Aut}\big(Oi\big(2\nu,q\big)\big)\big|=\left\{ \begin{array}{ll}
2^{\frac{q+1}{2}}\cdot(\frac{q-1}{2})!, & {\rm\ if}\ \nu=1;\\
\frac{1}{2}q^{\nu(\nu-1)} \mathop{\prod}\limits_{i=1}^{\nu}(q^i-1) \mathop{\prod}\limits_{i=1}^{\nu-1}(q^i+1)[\mathbb{F}_q:\mathbb{F}_p],&\ {\rm if}\ \nu\geq2 \ {\rm and\ }-1\in\mathbb{F}_q^{*2};\\
q^{\nu(\nu-1)} \mathop{\prod}\limits_{i=1}^{\nu}(q^i-1) \mathop{\prod}\limits_{i=1}^{\nu-1}(q^i+1)[\mathbb{F}_q:\mathbb{F}_p],&\ {\rm if}\ \nu\geq2\ {\rm and\ }-1\notin\mathbb{F}_q^{*2}.
\end{array} \right.
\end{displaymath}
\end{Corollary}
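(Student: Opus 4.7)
The plan is to split at $\nu = 1$, which follows by a direct combinatorial inspection of $Oi(2,q)$, and $\nu \ge 2$, where Theorem \ref{ot1} reduces the count to three group orders plus one intersection.

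\textbf{Case $\nu=1$.} From Example \ref{e1}, $Oi(2,q)$ has $q+1$ vertices: the two totally isotropic vertices $[(1,0)]$ and $[(0,1)]$ each carry a self-loop, while the remaining $q-1$ vertices are partitioned into $(q-1)/2$ edges (matched pairs). Any automorphism must preserve the set of self-looped vertices and the matching structure, and conversely any such permutation is a graph automorphism. Counting the three independent choices — swapping the two self-looped vertices, permuting the $(q-1)/2$ matched pairs, and swapping endpoints within each pair — gives $2 \cdot ((q-1)/2)! \cdot 2^{(q-1)/2} = 2^{(q+1)/2}((q-1)/2)!$, as required.

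\textbf{Case $\nu \ge 2$.} By Theorem \ref{ot1}, ${\rm Aut}\bigl(Oi(2\nu,q)\bigr) = PO_{2\nu}(\mathbb{F}_q)\cdot E_{2\nu}$, so I would apply the identity
\[
\bigl|PO_{2\nu}(\mathbb{F}_q)\cdot E_{2\nu}\bigr| = \frac{|PO_{2\nu}(\mathbb{F}_q)|\cdot|E_{2\nu}|}{|PO_{2\nu}(\mathbb{F}_q)\cap E_{2\nu}|}
\]
and compute each term. The classical formula for the plus-type orthogonal group gives $|O_{2\nu}(\mathbb{F}_q)|=2q^{\nu(\nu-1)}\prod_{i=1}^\nu(q^i-1)\prod_{i=1}^{\nu-1}(q^i+1)$, and hence $|PO_{2\nu}(\mathbb{F}_q)|$ is half of this. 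The isomorphism of Theorem \ref{ot1} gives $|E_{2\nu}|=\tfrac{q-1}{2}(q-1)^{\nu-1}[\mathbb{F}_q:\mathbb{F}_p]\big/|K_{2\nu}|$, where $|K_{2\nu}|$ is $2$ when $-1\in\mathbb{F}_q^{*2}$ and $1$ otherwise.

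The main obstacle will be the intersection. I would characterize $[T]\in PO_{2\nu}(\mathbb{F}_q)\cap E_{2\nu}$ by lifting to $T\in O_{2\nu}(\mathbb{F}_q)$ with $\sigma_T$ fixing each $[e_i]$ and $[f_j]$; this forces $T={\rm diag}(c_1,\ldots,c_\nu,d_1,\ldots,d_\nu)$ for some scalars $c_i,d_j\in\mathbb{F}_q^*$, and the orthogonality relation $TS_{2\nu}\,{^tT}=S_{2\nu}$ then forces $d_i=c_i^{-1}$. There are $(q-1)^\nu$ such diagonal lifts; since $-I$ already sits inside this diagonal subgroup (take all $c_i=-1$), quotienting by $\{\pm I\}$ yields $|PO_{2\nu}(\mathbb{F}_q)\cap E_{2\nu}|=(q-1)^\nu/2$. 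Substituting the three quantities into the ratio formula and simplifying recovers the two case-dependent expressions, the factor $\tfrac12$ versus $1$ in front tracking precisely the value $|K_{2\nu}|=2$ versus $|K_{2\nu}|=1$.
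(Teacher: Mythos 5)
Your overall route is the same as the paper's: the $\nu=1$ case is read off from Example \ref{e1}, and for $\nu\geq2$ one applies $\big|PO_{2\nu}(\mathbb{F}_q)\cdot E_{2\nu}\big|=|PO_{2\nu}(\mathbb{F}_q)|\,|E_{2\nu}|\big/|PO_{2\nu}(\mathbb{F}_q)\cap E_{2\nu}|$ with the same values of $|PO_{2\nu}(\mathbb{F}_q)|$ and $|E_{2\nu}|$. Your treatment of $\nu=1$ is more explicit than the paper's (which only asserts the count is easy), and your description of $Oi(2,q)$ as two looped vertices together with $(q-1)/2$ disjoint edges is the correct one; note that it in effect corrects the edge condition in Example \ref{e1}, which for $S_2=\left(\begin{smallmatrix}0&1\\1&0\end{smallmatrix}\right)$ should read $x+y=0$ rather than $xy=-1$.

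The one point where you diverge is the intersection: you obtain $|PO_{2\nu}(\mathbb{F}_q)\cap E_{2\nu}|=(q-1)^\nu/2$ by the direct argument with diagonal orthogonal matrices, while the paper asserts $\frac14(q-1)^\nu$ in the case $-1\in\mathbb{F}_q^{*2}$. Your count is the coherent one: there are exactly $(q-1)^\nu$ matrices ${\rm diag}(c_1,\ldots,c_\nu,c_1^{-1},\ldots,c_\nu^{-1})$ in $O_{2\nu}(\mathbb{F}_q)$, each induced $\sigma_T$ fixes every $[e_i]$ and $[f_i]$, and $\sigma_T=\sigma_{T'}$ iff $T=\pm T'$; moreover it is your value, not the paper's, that makes the three factors multiply out to the displayed formula (with $\frac14(q-1)^\nu$ in the denominator the quotient equals $|PO_{2\nu}(\mathbb{F}_q)|\,[\mathbb{F}_q:\mathbb{F}_p]$, i.e.\ the stated answer without the leading $\frac12$ when $-1\in\mathbb{F}_q^{*2}$). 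However, you should notice the tension your own two computations create: when $-1\in\mathbb{F}_q^{*2}$ and $q=p$, your intersection $(q-1)^\nu/2$ strictly exceeds the order $\frac14(q-1)^\nu[\mathbb{F}_q:\mathbb{F}_p]=\frac14(q-1)^\nu$ that you extract for the ambient group $E_{2\nu}$ from Theorem \ref{ot1}, which is impossible for a subgroup. The two inputs cannot both be right: a diagonal automorphism with $c_1$ a nonsquare lies in $E_{2\nu}$ but cannot equal any $\sigma_{(k_1,\ldots,k_\nu,\pi)}$ with $k_1\in\mathbb{F}_q^{*2}$ when $-1$ is a square (one checks $\pi$ must be the identity and $c_1=\pm k_1$), so the restriction $k_1\in\mathbb{F}_q^{*2}$ in Theorem \ref{ot1} undercounts $E_{2\nu}$ in that case. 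A complete proof of the corollary has to resolve this discrepancy rather than take $|E_{2\nu}|$ from the theorem for the numerator while implicitly using an incompatible description of $E_{2\nu}$ in the denominator.
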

\begin{proof} (1) Let $\nu=1$. Vertices $[e_1], [e_2]$ in $Oi\big(2,q\big)$ are totally isotropic subspaces. According to Example \ref{e1}, it is easy to calculate that $\big|{\rm Aut}\big(Oi\big(2,q\big)\big)\big|=2^{\frac{q+1}{2}}\cdot(\frac{q-1}{2})!$.

(2) Let $\nu\geq2$ and $-1\in\mathbb{F}_q^{*2}$. Then
$\big|{\rm Aut}\big(Oi\big(2\nu,q\big)\big)\big|= \frac{|PO_{2\nu}(\mathbb{F}_q)|\cdot|E_{2\nu}|}{|PO_{2\nu}(\mathbb{F}_q)\cap E{2\nu}|}.$ According to \cite[Theorem 6.21]{zwG}, we see that $|PO_{2\nu}(\mathbb{F}_q)|= \frac{1}{2}|O_{2\nu}(\mathbb{F}_q)|= \frac{1}{2}q^{\nu(\nu-1)} \prod_{i=1}^{\nu}(q^i-1) \prod_{i=0}^{\nu-1}(q^i+1).$ Clearly, we have $|E_{2\nu}|=\frac{1}{4}(q-1)^{\nu}\cdot[\mathbb{F}_q:\mathbb{F}_p]$, where $|{\rm Aut}(\mathbb{F}_q)|=[\mathbb{F}_q:\mathbb{F}_p]$. Let $\sigma_P\in PO_{2\nu}(\mathbb{F}_q)\cap E_{2\nu}$. Then $\sigma_P=\sigma_{(k_1, \ldots, k_{\nu},\pi)}$ for some $k_1\in \mathbb{F}_q^{*2},k_1,\ldots, k_{\nu}\in \mathbb{F}_q^*$ and $\pi\in {\rm Aut}(\mathbb{F}_q)$. So $|PO_{2\nu}(\mathbb{F}_q)\cap E_{2\nu}|=\frac{1}{4}(q-1)^{\nu}$ and thus
$$~~\big|{\rm Aut}\big(Oi\big(2\nu,q\big)\big)\big|= \frac{1}{2}q^{\nu(\nu-1)} \mathop{\prod}\limits_{i=1}^{\nu}(q^i-1) \mathop{\prod}\limits_{i=1}^{\nu-1}(q^i+1)[\mathbb{F}_q:\mathbb{F}_p].$$

(3) Let $\nu\geq2$ and $-1\notin\mathbb{F}_q^{*2}$. Similar to the case of $(2)$, we can prove that $\big|{\rm Aut}\big(Oi\big(2\nu,q\big)\big)\big|= q^{\nu(\nu-1)} \mathop{\prod}\limits_{i=1}^{\nu}(q^i-1) \mathop{\prod}\limits_{i=1}^{\nu-1}(q^i+1)[\mathbb{F}_q:\mathbb{F}_p].$
\end{proof}

Secondly, we will discuss the case of $\delta=1$. Since the groups $O_{2\nu+1,1}(\mathbb{F}_q)$ and $O_{2\nu+1,z}(\mathbb{F}_q)$ are isomorphic, we need only consider the case of $O_{2\nu+1,1}(\mathbb{F}_q)$. The other case is similar to prove.
Let $S=S_{2\nu+1,1}$. In what follows, we write $\varepsilon$ for $e_{2\nu+1}$. Then
$$e_i{S_{2\nu+1,1}}{^t\!f_i}=\varepsilon S_{2\nu+1,1}{^t\!\varepsilon}=1,e_i{S_{2\nu+1,1}}{^t\!e_j}=f_i{S_{2\nu+1,1}}{^t\!f_j}=0\ {\rm for}\ 1\leq i,j\leq\nu$$
and
$$e_i{S_{2\nu+1,1}}{^t\!f_j}=e_i{S_{2\nu+1,1}}{^t\!\varepsilon}= f_i{S_{2\nu+1,1}}{^t\!\varepsilon}=0\ {\rm for}\ i\neq j,1\leq i,j\leq\nu.$$

Similar to the case of $\delta=0$, let $\varphi_{2\nu+1}$ be the natural action of ${\rm Aut}(\mathbb{F}_q)$ on the group $\mathbb{F}_q^{*2}\times \mathbb{F}_q^*\times \cdots\times \mathbb{F}_q^*\times \mathbb{F}_3^*$ defined by $\varphi_{2\nu+1}(\pi)((k_1,k_2,\ldots, k_{\nu},\delta_1))= (\pi(k_1),\pi(k_2), \ldots, \pi(k_{\nu}),\pi(\delta_1))$ and $\big(\mathbb{F}_q^{*2}\times \mathbb{F}_q^*\times \cdots\times \mathbb{F}_q^*\times \mathbb{F}_3^*\big) \rtimes_{\varphi_{2\nu+1}} {\rm Aut}\big(\mathbb{F}_q\big)$ be a semidirect product group corresponding to $\varphi_{2\nu+1}$. Moreover, we define maps $\sigma_{(k_1,k_2,\ldots,k_{\nu},\delta_1,\pi)}$ from $V\big(Oi\big(2\nu+1,q\big)\big)$ to itself by
$$\sigma_{(k_1,k_2,\ldots,k_\nu, \delta_1,\pi)}(A)= \sigma_\pi(A)\cdot {\rm diag}(k_1,k_2,\ldots,k_\nu,k_1^{-1},k_2^{-1},\ldots,k_{\nu}^{-1},\delta_1),$$
where $k_1\in \mathbb{F}_q^{*2}$, $k_2,\ldots, k_{\nu}\in \mathbb{F}_q^*$, $\delta_1\in \mathbb{F}_3^*$ and $\pi\in {\rm Aut}(\mathbb{F}_q)$. It is easy to prove that $\sigma_{(k_1,k_2,\ldots,k_{\nu},\delta_1,\pi)}\in {\rm Aut}\big(Oi\big(2\nu+1,q\big)\big)$.

\begin{Theorem}\label{ot2}
Let $\nu\geq2$ and $E_{2\nu+1}$ be the subgroup of ${\rm Aut}\big(Oi\big(2\nu+1,q\big)\big)$ defined as follows
$$\big\{\sigma\in {\rm Aut}\big(Oi\big(2\nu+1,q\big)\big): \sigma\big([e_{i}]\big)=[e_{i}], \sigma\big([f_{i}]\big)=[f_{i}]\ {\rm and}\ \sigma\big([\varepsilon]\big)=[\varepsilon]\ {\rm for}\ 1\leq i\leq\nu\big\}.$$
Then ${\rm Aut}\big(Oi\big(2\nu+1,q\big)\big)=PO_{2\nu+1}(F_q)\cdot E_{2\nu+1}$. Moreover, let
\begin{eqnarray*}
h:\left.{\big((\mathbb{F}_q^{*2}\times\mathbb{F}_q^*\times\cdots\times \mathbb{F}_q^*\times \mathbb{F}_3^*)\rtimes_{\varphi_{2\nu+1}}{\rm Aut}(\mathbb{F}_q)\big)}\middle/ K_{2\nu+1}\right. &\longrightarrow& E_{2\nu+1}\\
(k_1,k_2,\ldots,k_\nu,\delta_1,\pi)K_{2\nu+1} &\longrightarrow& \sigma_{(k_1,k_2,\ldots,k_\nu,\delta_1,\pi)},
\end{eqnarray*}
where $1_{\mathbb{F}_q}$ is an identity element of ${\rm Aut}(\mathbb{F}_q)$, and
\begin{displaymath}
K_{2\nu+1}=\left\{ \begin{array}{ll}
\{(c,c,\ldots,c,c,1_{\mathbb{F}_q}):c=\pm1\}, &\ {\rm if}\ -1\in\mathbb{F}_q^{*2},\\
\{(1,1,\ldots,1,1,1_{\mathbb{F}_q}) \}, &\ {\rm if}\ -1\notin\mathbb{F}_q^{*2}.
\end{array} \right.
\end{displaymath}
Then $h$ is an isomorphism of groups from $\left.{\big((\mathbb{F}_q^{*2}\times\mathbb{F}_q^*\times\cdots\times \mathbb{F}_q^*\times \mathbb{F}_3^*)\rtimes_{\varphi_{2\nu+1}}{\rm Aut}(\mathbb{F}_q)\big)}\middle/ K_{2\nu+1}\right.$ to $E_{2\nu+1}$.
\end{Theorem}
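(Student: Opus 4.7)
The proof runs parallel to Theorem~\ref{ot1}; the only structural novelties are the extra anisotropic coordinate $\varepsilon$ and the extra factor $\mathbb{F}_3^*$ in the semidirect product. I would split the argument into two claims: (I) ${\rm Aut}\big(Oi(2\nu+1,q)\big)=PO_{2\nu+1}(\mathbb{F}_q)\cdot E_{2\nu+1}$, and (II) $h':(k_1,\ldots,k_\nu,\delta_1,\pi)\mapsto\sigma_{(k_1,\ldots,k_\nu,\delta_1,\pi)}$ is a surjective homomorphism from the semidirect product onto $E_{2\nu+1}$ with kernel $K_{2\nu+1}$, so that $h$ is an isomorphism after passing to the quotient.

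For (I), take $\tau\in{\rm Aut}\big(Oi(2\nu+1,q)\big)$. By Proposition~\ref{2.3}, $\tau$ sends each of $[e_i],[f_i],[\varepsilon]$ to a $1$-dimensional subspace $[\alpha_i],[\beta_i],[\gamma]$, and these inherit the orthogonality pattern of the standard basis with respect to $S=S_{2\nu+1,1}$. Proposition~\ref{2.5} preserves types, so $\gamma S\,{^t\gamma}$ is a nonzero square; after rescaling representatives I may assume $\alpha_iS\,{^t\beta_i}=1$ and $\gamma S\,{^t\gamma}=1$, which forces equal Gram matrices $AS\,{^tA}=A'S\,{^tA'}$ for $A=[e_1,\ldots,e_\nu,f_1,\ldots,f_\nu,\varepsilon]$ and $A'=[\alpha_1,\ldots,\alpha_\nu,\beta_1,\ldots,\beta_\nu,\gamma]$. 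Witt's extension (\cite[Lemma 6.8]{zwG}) then supplies $T\in O_{2\nu+1}(\mathbb{F}_q)$ with $A=A'T$, and $\sigma_T\tau\in E_{2\nu+1}$, proving (I).

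For the kernel half of (II), $h'$ is a homomorphism by the multiplication rule of $\varphi_{2\nu+1}$ together with Proposition~\ref{2.4}. If $\sigma_{(k_1,\ldots,k_\nu,\delta_1,\pi)}$ is the identity, then evaluating on lines like $[e_1+xe_j]$ and $[e_1+x\varepsilon]$ first forces $\pi=1_{\mathbb{F}_q}$, and then $k_1=\cdots=k_\nu=\delta_1=c$ with $c^2=1$; combined with $k_1\in\mathbb{F}_q^{*2}$ this recovers $K_{2\nu+1}$ exactly as stated, so $h$ descends to an injection.

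The main work, and the principal obstacle, is the surjectivity of $h'$: showing that every $\sigma\in E_{2\nu+1}$ has the form $\sigma_{(k_1,\ldots,k_\nu,\delta_1,\pi)}$. I would mirror the bookkeeping of Theorem~\ref{ot1}. First, for $[(a_1,\ldots,a_{2\nu+1})]$ with $\sigma$-image $[(b_1,\ldots,b_{2\nu+1})]$, the equivalence ``$a_k=0\Leftrightarrow b_k=0$'' follows because each coordinate vanishing is equivalent to adjacency with $[e_i]$, $[f_i]$ or $[\varepsilon]$, all of which are fixed. This lets me place images in normalized staircase form. Next, I define permutations of $\mathbb{F}_q$ fixing $0$ by
\[
\sigma([e_1+ae_i])=[e_1+\pi_i(a)e_i],\qquad \sigma([e_1+af_i])=[e_1+\pi_{\nu+i}(a)f_i],\qquad \sigma([e_1+a\varepsilon])=[e_1+\pi_{2\nu+1}(a)\varepsilon].
\]
Using test edges such as $[e_1+ae_i]\text{---}[f_1-a^{-1}f_i]$ and $[e_1+a\varepsilon]\text{---}[f_1-a^{-1}\varepsilon]$, together with mixed edges involving the $\varepsilon$-coordinate (for instance $[(1,a+b,0,\ldots,0,1)]\text{---}[(0,1,ab^{-1},0,\ldots,0,-b^{-1})]$), I would reproduce the additive and multiplicative identities of the $\delta=0$ calculation to conclude that $\pi:=\pi_2(1)^{-1}\pi_2$ is a field automorphism and each $\pi_i$ equals $\pi_i(1)\cdot\pi$. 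The genuinely new step is the $\varepsilon$-coordinate: since $[\varepsilon]$ has type $(1,1,0,1)$, Proposition~\ref{2.5} applied along a line of the form $[e_i+f_i+\varepsilon]$ forces $\pi_2(1)\pi_{\nu+2}(1)\cdot\pi_{2\nu+1}(1)^{-2}$ to be a square, so after absorbing a square into $k_1$ there remains only a sign $\delta_1=\pm 1$, which is exactly the $\mathbb{F}_3^*$ factor. Setting $k_1=\pi_2(1)\pi_{\nu+2}(1)$, $k_i=\pi_{2i-1}(1)$ for $2\le i\le\nu$, and $\delta_1$ equal to this residual sign then gives $\sigma=\sigma_{(k_1,\ldots,k_\nu,\delta_1,\pi)}$, completing (II) and hence the theorem.
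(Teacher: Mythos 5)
Your overall architecture matches the paper's: part (I) (the factorization ${\rm Aut}\big(Oi(2\nu+1,q)\big)=PO_{2\nu+1}(\mathbb{F}_q)\cdot E_{2\nu+1}$ via equal Gram matrices and the Witt extension lemma \cite[Lemma 6.8]{zwG}), the identification of $\ker h'$ with $K_{2\nu+1}$, and the reduction of surjectivity to the coordinate bookkeeping of Theorem \ref{ot1} are all exactly what the paper does (the paper likewise imports the $\delta=0$ computation wholesale and concentrates on the new $\varepsilon$-coordinate). The problem is the one step you call genuinely new. You try to pin down the $\varepsilon$-scaling $\pi_{2\nu+1}(1)$ by applying Proposition \ref{2.5} to an anisotropic line such as $[e_i+f_i+\varepsilon]$. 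Type preservation of a $1$-dimensional anisotropic subspace only constrains the \emph{square class} of its Gram entry: it says $3$ and $2+\pi_{2\nu+1}(1)^2$ lie in the same coset of $\mathbb{F}_q^{*2}$, which does not force $\pi_{2\nu+1}(1)^2=1$ (in $\mathbb{F}_7$, for instance, $3$ and $2+2^2=6$ are both non-squares, yet $2\neq\pm1$). Nor can the surplus be ``absorbed into $k_1$'': the matrix ${\rm diag}(k_1,\ldots,k_\nu,k_1^{-1},\ldots,k_\nu^{-1},\delta_1)$ is orthogonal with respect to $S_{2\nu+1,1}$ only when $\delta_1^2=1$, and the $\varepsilon$-slot has no companion inverse slot, so there is nowhere to move a square factor. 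A square-class condition is strictly weaker than the needed identity $\delta_1^2=1$.

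The correct constraint is an adjacency constraint, not a type constraint. The paper obtains it (for $\nu=2$) from the edge $[(1,0,0,0,1)]$---$[(0,0,1,0,-1)]$, i.e. $[e_1+\varepsilon]$---$[f_1-\varepsilon]$: once one knows $\pi_5(-1)=-\pi_5(1)$ and that the \emph{same} normalized permutation $\pi_5$ governs the $\varepsilon$-coordinate in the images of lines through $e_1$ and through $f_1$ (which the paper establishes with the auxiliary vertices $[(0,0,1,-a_5,\pm1)]$ and their neighbours), the images $[k_1e_1+\pi_5(1)\varepsilon]$ and $[k_1^{-1}f_1-\pi_5(1)\varepsilon]$ are adjacent iff $1-\pi_5(1)^2=0$, whence $\pi_5(1)\in\{\pm1\}=\mathbb{F}_3^*$. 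To repair your proposal, replace the Proposition \ref{2.5} step by this edge computation (your own list of test edges already contains $[e_1+a\varepsilon]$---$[f_1-a^{-1}\varepsilon]$, but you must first tie the $\varepsilon$-permutations attached to the $e$-side and the $f$-side together before that edge yields $\pi_{2\nu+1}(1)^2=1$). The remaining items — the homomorphism property of $h'$, the kernel computation, and the indexing of the $k_i$ (note the slip $k_i=\pi_{2i-1}(1)$, which should be $k_i=\pi_i(1)$ up to the chosen normalization) — are minor and consistent with the paper.
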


\begin{proof} Similar to Theorem \ref{ot1}, we can prove that $${\rm Aut}\big(Oi\big(2\nu+1, q\big)\big)= PO_{2\nu+1,1}(\mathbb{F}_q)\cdot E_{2\nu+1}.$$
Define a map:
\begin{eqnarray*}
h^{'}:(\mathbb{F}_q^{*2}\times\mathbb{F}_q^*\times\cdots\times \mathbb{F}_q^*\times \mathbb{F}_3^*)\rtimes_{\varphi_{2\nu+1}} {\rm Aut}(\mathbb{F}_q) & \longrightarrow & E_{2\nu+1}\\
(k_1,k_2,\ldots,k_\nu,\delta_1,\pi) & \longrightarrow & \sigma_{(k_1,k_2,\ldots,k_\nu,\delta_1,\pi)}.
\end{eqnarray*}
By the definition of $\sigma_{(k_1,k_2,\ldots,k_\nu,\delta_1,\pi)}$, it is easy to prove  that $h^{'}$ is a group homomorphism and the kernel of $h^{'}$ is $K_{2\nu+1}$. Hence in order to show that $h$ is a group isomorphism, 
it suffices to show that for any $\sigma\in E_{2\nu+1}$, there exist $k_1\in\mathbb{F}_q^{*2},~k_2,\ldots,k_{\nu}\in\mathbb{F}_q^*$, $\delta_1\in \mathbb{F}_3^*$ and $\pi\in{\rm Aut}(\mathbb{F}_q)$ such that ~$$\sigma=\sigma_{(k_1,k_2,\ldots,k_{\nu},\delta_1,\pi)}.$$
\vskip 0.15cm

Let $\sigma\in E_{2\nu+1}$. And then we will prove that there exist $k_1\in\mathbb{F}_q^{*2}$, $k_2,\ldots,k_{\nu}\in\mathbb{F}_q^*$, $\delta_1\in \mathbb{F}_3^*$ and $\pi\in{\rm Aut}(\mathbb{F}_{q^2})$ such that $\sigma=\sigma_{(k_1,k_2,\ldots,k_{\nu},\delta_1,\pi)}$.

Let $[(a_1,a_2,\ldots,a_{2\nu+1})]\in V\big(Oi\big(2\nu+1,q\big)\big)$ and suppose that $\sigma([(a_1,a_2,\ldots,a_{2\nu+1})]) =[(b_1,b_2,\ldots,b_{2\nu+1})]$. Similar to the proof of the case $\delta=0$, we can conclude that
$$a_i=0\ {\rm if\ and\ only\ if}\ b_i=0,\ 1\leq i\leq2\nu+1,$$
and define permutations $\pi_j$ of $\mathbb{F}_q$ with $\pi_j(0)=0$ such that
$$\sigma\big([e_1+a_ke_k]\big)=[k_1e_1+\pi_k(a_k)e_k], \ $$
$$\ \ \sigma\big([e_1+a_{\nu+k}f_{k}]\big)= [k_1e_1+\pi_{\nu+k}(a_{\nu+k})f_{k}]\,$$
and
$$\ \ \ \ \ \sigma\big([e_1+a_{2\nu+1}\varepsilon]\big)= [k_1e_1+\pi_{2\nu+1}(a_{2\nu+1})\varepsilon],\ $$
where $j=2,\ldots,\nu,\nu+2,\ldots,2\nu+1$, $k=2,\ldots,\nu$ and $k_1^2=\pi_2(1)^{-1}\pi_4(1)^{-1}$.
\vskip 0.15cm

In the following proof, we need only consider two cases: $\nu=2$ and $\nu>2$.

(1) $\nu=2$. By Theorem \ref{ot1}, we know that $\sigma$ carries the vertex $[(a_1,a_2,a_3,a_4,a_{5})]$ of $Oi\big(5,q\big)$ into the vertex
$$[(k_1\pi(a_1),k_2\pi(a_2), k_1^{-1}\pi(a_{3}),k_2^{-1}\pi(a_{4}), x_{5})],$$
where $k_1=\pi_2(1)\pi_4(1),$ $k_2=\pi_2(1)$, $\pi=\pi_2(1)^{-1}\pi_2= \pi_4(1)^{-1}\pi_4$ and $x_{5}$ is to be determined. 
In order to determine the value of $x_{5},$ we first determine the images of $[e_i+a_{5}e_5]$ and $[f_i+a_{5}e_5]$ for $i=1,2$ under $\sigma$. Suppose that $\sigma([e_i+a_5\varepsilon])= [k_ie_i+a_{5}^{'}\varepsilon]$ and $\sigma([f_i+a_5\varepsilon])= [k_i^{-1}f_i+a_{5}^{'}\varepsilon]$ for $i=1,2$.

Suppose that $\sigma([(0,0,1,-a_{5},1)])= [(0,0,k_1^{-1},k_2^{-1}\pi(a_{5}),x)]$ and $\sigma([(0,0,1,-a_{5},-1)])= [(0,0,\allowbreak k_1^{-1},-k_2^{-1}\pi(a_{5}),y)]$.
Since $[(0,0,1,-a_{5}, 1)]$---$[(1,0,0,0,-1)]$, we have $[(0,0,k_1^{-1},-k_2^{-1}\pi(a_{5}),x)]$ --- $[(k_1,0,0,0,\pi_5(-1))],$ i.e., $x=-\pi_{5}(-1)^{-1}$. Since $[(0,1,0,0, a_{5})]$ --- $[(0,0,1,-a_5,1)] $, we have $[(0, k_2, 0,0,a_{5}^{'})]$ --- $[(0,0,k_1^{-1},-k_2^{-1}\pi(a_{5}), -\pi_{5}(-1)^{-1})]$, i.e., $a_{5}^{'}= -\pi_{5}(-1)\pi(a_{5})$.  Since $[(0,0,1,-a_{5},-1)]$ --- $[(1,0,0,0,1)]$, we have $[(0,0,k_1^{-1},-k_2^{-1}\pi(a_{5}),y)]$ --- $[(k_1,0,0,0,\pi_5(1))]$, i.e., $y=-\pi_{5}(1)^{-1}$. Since $[(0,1,0,0,- a_{5})]$ --- $[(0,0,1,-a_{5},-1)]$, we have $[(0,k_2,0,0,\pi_{5}(-1)\pi(a_{5}))]$ --- $[(0,0,k_1^{-1},-k_2^{-1}\pi(a_{5}),-\pi_{5}(1)^{-1})]$, i.e., $\pi_{5}(1)=-\pi_{5}(-1)$. Thus $\sigma([(0,1,0,0,a_{5})])= [(0,k_2,0,0,\pi_{5}(1)\pi(a_{5}))].$

Similarly, we can prove that $\sigma([(0,0,0,1,a_{5})])= [(0,0,0,k_2^{-1},\pi_{5}(1)\pi(a_{5}))]$, $\sigma([(1,0,0,0,a_{5})])= [(k_1,0,0,0,\pi_{5}(1)\pi(a_{5}))]$ and $\sigma([(0,0,1,0,a_{5})])= [(0,0,k_1^{-1},0,\pi_{5}(1)\pi(a_{5}))]$.




Since $[(1,0,0,0,1)]$ --- $[(0,0,1,0,-1)]$, it follows that $[(k_1,0,0,0,\pi_{5}(1))]$ --- $[(0,0,k_1^{-1},0,\pi_{5}(1))]$, i.e., $\pi_{5}(1)^2=1$. Hence we have $\pi_{5}(1)\in F_3^*$. It is easy to check that
$$\sigma([(a_1,a_2,a_3,a_4,,a_{5})])=[(k_1\pi(a_1),k_2\pi(a_2), k_1^{-1}\pi(a_{3}),k_2^{-1}\pi(a_{4}), \pi_5(1)\pi(a_{5}))].$$

Lastly, Let $k_1=\pi_2(1)\pi_{4}(1), k_2=\pi_{2}(1), \delta_1=\pi_5(1)$ and $\pi=\pi_2(1)^{-1}\pi_2=\pi_4(1)^{-1}\pi_4=\pi_5(1)^{-1}\pi_5$. Then for $A=\big(a_{ij}\big)_{m\times5}\in V\big(Oi\big(5,q\big)\big)$,
\begin{displaymath}
\sigma\big(A\big)=\left(
\begin{array}{ccccc}
k_1\pi(a_{11}) & k_2\pi(a_{12}) &k_1^{-1}\pi(a_{13}) &k_2^{-1}\pi(a_{14}) & \delta_1\pi(a_{15})\\
\vdots&\vdots & \vdots& \vdots& \vdots\\
k_1\pi(a_{m1}) & k_2\pi(a_{m2}) &k_1^{-1}\pi(a_{m3}) & k_2^{-1}\pi(a_{m4})&\delta_1\pi(a_{15})
  \end{array}
\right)=\sigma_{(k_1,k_2,\delta_1,\pi)}\big(A\big).
\end{displaymath}
So $\sigma=\sigma_{(k_1,k_2,\delta_1,\pi)}.$
\vskip 1.5mm

(2) $\nu>2$. Similar to the case of $\nu=2$, we can prove that there exist $k_1\in\mathbb{F}_q^{*2},k_2\ldots,k_{\nu}\in \mathbb{F}_q^*, \delta_1\in\mathbb{F}_3^{*2}$ and $\pi\in{\rm Aut}(\mathbb{F}_q)$ such that $\sigma=\sigma_{(k_1,k_2\ldots,k_{\nu},\delta_1,\pi)}.$
\end{proof}

\begin{Corollary} Let $\nu\geq2$ and $\mathbb{F}_q$ be a field of characteristic $p$. Then
\begin{displaymath}
\big|{\rm Aut}\big(Oi\big(2\nu+1,q\big)\big)\big|=\left\{ \begin{array}{ll}
\frac{1}{2}q^{\nu^2} \mathop{\prod}\limits_{i=1}^{\nu}(q^i-1) \mathop{\prod}\limits_{i=1}^{\nu}(q^i+1)[\mathbb{F}_q:\mathbb{F}_p], & \textrm{µ±}~-1\in\mathbb{F}_q^{*2},\\
q^{\nu^2} \mathop{\prod}\limits_{i=1}^{\nu}(q^i-1) \mathop{\prod}\limits_{i=1}^{\nu}(q^i+1)[\mathbb{F}_q:\mathbb{F}_p], & \textrm{µ±}~-1\notin\mathbb{F}_q^{*2},
\end{array} \right.
\end{displaymath}

\end{Corollary}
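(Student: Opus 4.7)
The plan is to parallel the proof of Corollary \ref{3.3}, working now in odd dimension. By Theorem \ref{ot2} we have the factorization $\mathrm{Aut}(Oi(2\nu+1,q))=PO_{2\nu+1}(\mathbb{F}_q)\cdot E_{2\nu+1}$, so the standard subgroup-product identity
\[
|\mathrm{Aut}(Oi(2\nu+1,q))|=\frac{|PO_{2\nu+1}(\mathbb{F}_q)|\cdot|E_{2\nu+1}|}{|PO_{2\nu+1}(\mathbb{F}_q)\cap E_{2\nu+1}|}
\]
reduces the problem to computing the three cardinalities on the right. The first is classical: from \cite[Theorem 6.21]{zwG} one has $|O_{2\nu+1,1}(\mathbb{F}_q)|=2q^{\nu^2}\prod_{i=1}^\nu(q^{2i}-1)$, and since $-I$ is distinct from $I$ in odd characteristic, $|PO_{2\nu+1}(\mathbb{F}_q)|=\tfrac{1}{2}|O_{2\nu+1,1}(\mathbb{F}_q)|=q^{\nu^2}\prod_{i=1}^\nu(q^i-1)(q^i+1)$, which is exactly the prefactor appearing in the statement.

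For the second cardinality I would read off the group isomorphism in Theorem \ref{ot2}. The semidirect product $(\mathbb{F}_q^{*2}\times(\mathbb{F}_q^*)^{\nu-1}\times\mathbb{F}_3^*)\rtimes_{\varphi_{2\nu+1}}\mathrm{Aut}(\mathbb{F}_q)$ has order $\tfrac{q-1}{2}\cdot(q-1)^{\nu-1}\cdot 2\cdot[\mathbb{F}_q:\mathbb{F}_p]=(q-1)^\nu[\mathbb{F}_q:\mathbb{F}_p]$, and the explicit description of $K_{2\nu+1}$ in Theorem \ref{ot2} yields $|K_{2\nu+1}|=2$ when $-1\in\mathbb{F}_q^{*2}$ and $|K_{2\nu+1}|=1$ otherwise, so $|E_{2\nu+1}|=(q-1)^\nu[\mathbb{F}_q:\mathbb{F}_p]/|K_{2\nu+1}|$.

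The third cardinality is where the real work lies. If $\sigma_T\in PO_{2\nu+1}(\mathbb{F}_q)\cap E_{2\nu+1}$, then $\sigma_T$ fixes each of the projective lines $[e_i]$, $[f_i]$ and $[\varepsilon]$, which forces every row of $T$ to be a scalar multiple of the corresponding basis vector; thus $T$ is diagonal. Imposing $TS\,{}^{t}T=S$ then restricts $T$ to the form $\mathrm{diag}(t_1,\ldots,t_\nu,t_1^{-1},\ldots,t_\nu^{-1},\delta)$ with $t_i\in\mathbb{F}_q^*$ and $\delta\in\{\pm 1\}$, giving $2(q-1)^\nu$ diagonal orthogonal matrices. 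Since the involution $T\mapsto -T$ on this set is fixed-point free in odd characteristic, passing to $PO_{2\nu+1}(\mathbb{F}_q)=O_{2\nu+1,1}(\mathbb{F}_q)/\{\pm I\}$ halves the count to $|PO_{2\nu+1}(\mathbb{F}_q)\cap E_{2\nu+1}|=(q-1)^\nu$, independently of whether $-1$ is a square.

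Substituting the three values into the product formula, the factor $(q-1)^\nu$ cancels and one obtains
\[
|\mathrm{Aut}(Oi(2\nu+1,q))|=\frac{1}{|K_{2\nu+1}|}\,q^{\nu^2}\prod_{i=1}^\nu(q^i-1)(q^i+1)\,[\mathbb{F}_q:\mathbb{F}_p],
\]
which unpacks into the two displayed cases of the corollary. I expect the most delicate step to be the computation of $|PO_{2\nu+1}(\mathbb{F}_q)\cap E_{2\nu+1}|$: one must argue carefully that the fixed-point conditions really force $T$ to be diagonal, and check that the diagonal orthogonal subgroup has the uniform order $2(q-1)^\nu$ with a free $\{\pm I\}$-action, so that the final count $(q-1)^\nu$ is independent of the quadratic character of $-1$ and the two case formulas emerge solely from $|K_{2\nu+1}|$.
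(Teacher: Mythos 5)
Your overall strategy is the same one the paper intends: the paper's proof of this corollary is literally ``similar to that of Corollary \ref{3.3}'', and Corollary \ref{3.3} proceeds exactly as you do, via $|{\rm Aut}|=|PO|\cdot|E|/|PO\cap E|$ with $|PO|$ from Wan's order formula and $|E|$ read off the semidirect-product description. Your value of $|PO_{2\nu+1}(\mathbb{F}_q)|$ and your count of the quotient group are fine.

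The genuine problem is in your third cardinality, and it shows up as an internal contradiction between your own two counts. You take $|E_{2\nu+1}|=(q-1)^{\nu}[\mathbb{F}_q:\mathbb{F}_p]/|K_{2\nu+1}|$ from Theorem \ref{ot2}, but you compute $|PO_{2\nu+1}(\mathbb{F}_q)\cap E_{2\nu+1}|=(q-1)^{\nu}$ by a direct count of diagonal orthogonal matrices modulo $\pm I$. When $-1\in\mathbb{F}_q^{*2}$ and $q=p$ is prime (so $[\mathbb{F}_q:\mathbb{F}_p]=1$), this gives $|E_{2\nu+1}|=\tfrac12(q-1)^{\nu}<(q-1)^{\nu}=|PO\cap E_{2\nu+1}|$, which is impossible since the intersection is a subgroup of $E_{2\nu+1}$. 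The source of the clash is the matrices $T={\rm diag}(t_1,\ldots,t_\nu,t_1^{-1},\ldots,t_\nu^{-1},\delta)$ with $t_1$ a non-square: such a $T$ is orthogonal and fixes every $[e_i]$, $[f_i]$, $[\varepsilon]$, so $\sigma_T\in E_{2\nu+1}$, yet when $-1\in\mathbb{F}_q^{*2}$ neither $T$ nor $-T$ has its first entry in $\mathbb{F}_q^{*2}$, so $\sigma_T$ is not of the form $\sigma_{(k_1,\ldots,k_\nu,\delta_1,\pi)}$ with $k_1\in\mathbb{F}_q^{*2}$ that Theorem \ref{ot2} parametrizes. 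You cannot simultaneously use the quotient-group order for $|E_{2\nu+1}|$ and the full diagonal count for the intersection; the paper's template in Corollary \ref{3.3} takes the intersection to be the $\pi=1_{\mathbb{F}_q}$ part of the parametrized $E$, i.e.\ $(q-1)^{\nu}/|K_{2\nu+1}|$ rather than $(q-1)^{\nu}$, and with that choice your final ratio $|E|/|PO\cap E|$ becomes $[\mathbb{F}_q:\mathbb{F}_p]$ with no case distinction, so the displayed two-case formula no longer falls out of $|K_{2\nu+1}|$ as you claim. You need to decide which of the two counts is the correct description of $E_{2\nu+1}$ and redo the division consistently; as written, the fact that your final expression agrees with the statement is an artifact of combining two incompatible inputs.
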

\begin{proof}
The proof is similar to that of Corollary \ref{3.3}.
\end{proof}

Finally, we will discuss the case of $\delta=2$. Let $S=S_{2\nu+2,\Delta}$ and $\Delta={\rm diag}(1,-z)$ with $-1\in\mathbb{F}_q^{*2}$. In what follows, we write $\kappa$ for $e_{2\nu+2}$. Then
\begin{center}
$e_i{S_{2\nu+2}}{^t\!f_i}=\varepsilon S_{2\nu+2}{^t\!\varepsilon}=1, \kappa S_{2\nu+2}{^t\!\kappa}=-z,e_i{S_{2\nu+2}}{^t\!e_j}=f_i{S_{2\nu+2}}{^t\!f_j}=0$ for $1\leq i,j\leq\nu$
\end{center}
and
\begin{center}
$e_i{S_{2\nu+2}}{^t\!f_j}=e_i{S_{2\nu+2}}{^t\!\varepsilon}= f_i{S_{2\nu+2}}{^t\!\varepsilon}=e_i{S_{2\nu+2}}{^t\!\kappa}= f_i{S_{2\nu+2}}{^t\!\kappa}=0$ for $i\neq j,1\leq i,j\leq\nu.$
\end{center}

Similar to the case of $\delta=0$, let $\varphi_{2\nu+2}$ be the natural action of ${\rm Aut}(\mathbb{F}_q)$ on the group $\mathbb{F}_q^{*2}\times\mathbb{F}_q^*\times\cdots\times\mathbb{F}_q^* \times\mathbb{F}_3^*\times\mathbb{F}_3^*$ defined by $\varphi_{2\nu+2}(\pi)((k_1,k_2,\ldots, k_{\nu},\delta_1,\delta_2))= (\pi(k_1),\pi(k_2),\ldots,\pi(k_{\nu}),\pi(\delta_1),\pi(\delta_2))$ and $\big(\mathbb{F}_q^{*2}\times \mathbb{F}_q^*\times \cdots\times \mathbb{F}_q^*\times \mathbb{F}_3^*\times \mathbb{F}_3^*\big) \rtimes_{\varphi_{2\nu+2}} {\rm Aut}\big(\mathbb{F}_q\big)$ be a semidirect product group corresponding to $\varphi_{2\nu+2}$. Moreover, we define maps $\sigma_{(k_1,k_2,\ldots,k_{\nu},\delta_1,\delta_2,\pi)}$ from $V\big(Oi\big(2\nu+2,q\big)\big)$ to itself by
$$\sigma_{(k_1,k_2,\ldots,k_\nu,\delta_1,\delta_2,\pi)}(A)= \sigma_\pi(A)\cdot {\rm diag}(k_1,k_2,\ldots,k_\nu,k_1^{-1},k_2^{-1},\ldots,k_{\nu}^{-1},\delta_1,\delta_2),$$
where $k_1\in \mathbb{F}_q^{*2},k_2,\ldots,k_{\nu}\in \mathbb{F}_q^*,\delta_1,\delta_2\in \mathbb{F}_3^*$ and $\pi\in {\rm Aut}(\mathbb{F}_q)$. It is easy to prove that $\sigma_{(k_1,k_2,\ldots,k_{\nu},\delta_1,\delta_2,\pi)}\in {\rm Aut}\big(Oi\big(2\nu+2,q\big)\big)$.




\begin{Theorem}  \label{ot3}
Let $\nu\geq2$ and $E_{2\nu+2}$ be the subgroup of ${\rm Aut}\big(Oi\big(2\nu+2,q\big)\big)$ defined as follows:
$$\big\{\sigma\in {\rm Aut}\big(Oi\big(2\nu+2,q\big)\big): \sigma([e_{i}])=[e_{i}], \sigma([f_{i}])=[f_{i}],\sigma([\varepsilon])=[\varepsilon] \ {\rm and }\ \sigma([\kappa])=[\kappa]\ {\rm for}\ 1\leq i\leq\nu\big\}.$$
Then ${\rm Aut}\big(Oi\big(2\nu+2,q\big)\big)=PO_{2\nu+2}\big(\mathbb{F}_q\big)\cdot E_{2\nu+2}$. Moreover, let
\begin{eqnarray*}
h:\left.{\big((\mathbb{F}_q^{*2}\times\mathbb{F}_q^*\times\cdots\times \mathbb{F}_q^*\times \mathbb{F}_3^*\times \mathbb{F}_3^*)\rtimes_{\varphi_{2\nu+2}}{\rm Aut}(\mathbb{F}_q)\big)}\middle/ K_{2\nu+2}\right. &\longrightarrow& E_{2\nu+2}\\
(k_1,k_2,\ldots,k_\nu,\delta,\pi)K_{2\nu+2} &\longrightarrow& \sigma_{(k_1,k_2,\ldots,k_\nu,\delta_1,\delta_2,\pi)},
\end{eqnarray*}
where $1_{\mathbb{F}_q}$ is an identity element of ${\rm Aut}(\mathbb{F}_q)$, and
$K_{2\nu+2}=\{(c,c,\ldots,c,c,c,1_{\mathbb{F}_q}) \in(\mathbb{F}_q^{*2}\times\mathbb{F}_q^*\times\cdots\times \mathbb{F}_q^* \times \mathbb{F}_3^* \times \mathbb{F}_3^*)\rtimes_{\varphi_{2\nu+2}}{\rm Aut}(\mathbb{F}_q):c=\pm1\}$.
Then $h$ is an isomorphism of groups.
\end{Theorem}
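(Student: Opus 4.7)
The plan is to adapt the strategy of Theorems \ref{ot1} and \ref{ot2}: establish the decomposition ${\rm Aut}(Oi(2\nu+2,q)) = PO_{2\nu+2}(\mathbb{F}_q)\cdot E_{2\nu+2}$ by a Witt-extension argument, compute the kernel of the obvious map from the semidirect product into $E_{2\nu+2}$, and then prove surjectivity by invoking Theorem \ref{ot2} on the hyperplane $[\kappa]^\perp$ and bootstrapping to the $\kappa$-direction via cross-adjacencies.

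For the decomposition, take $\tau \in {\rm Aut}(Oi(2\nu+2,q))$. By Propositions \ref{2.3} and \ref{2.5}, one may write $\tau([e_i]) = [\alpha_i]$, $\tau([f_i]) = [\beta_i]$, $\tau([\varepsilon]) = [\gamma]$, $\tau([\kappa]) = [\kappa']$, where $[\gamma]$ is of type $(1,1,0,1)$ and $[\kappa']$ is of type $(1,1,0,z)$. After rescaling the representatives so that $\alpha_i S\,{}^t\!\beta_i = 1$, $\gamma S\,{}^t\!\gamma = 1$ and $\kappa' S\,{}^t\!\kappa' = -z$, the Gram matrix of the new basis with respect to $S$ equals $S$ itself, so Lemma~6.8 of \cite{zwG} produces $T \in O_{2\nu+2}(\mathbb{F}_q)$ with $\sigma_T\tau \in E_{2\nu+2}$. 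For the kernel computation, I would introduce the obvious map $h'$ on the full semidirect product; testing trivial action on rows of length one forces $\pi = 1_{\mathbb{F}_q}$ and the diagonal matrix ${\rm diag}(k_1,\ldots,k_\nu,k_1^{-1},\ldots,k_\nu^{-1},\delta_1,\delta_2)$ to be a scalar $cI$; combining $\delta_1,\delta_2 \in \mathbb{F}_3^* \subseteq \{\pm 1\}$ with $-1 \in \mathbb{F}_q^{*2}$ forces $c = \pm 1$, whence $\ker h' = K_{2\nu+2}$.

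The crux is surjectivity of $h$. Given $\sigma \in E_{2\nu+2}$, the fixation $\sigma([\kappa]) = [\kappa]$ together with adjacency-preservation shows that $\sigma$ restricts to an automorphism of the induced subgraph on those vertices of $Oi(2\nu+2,q)$ contained in $[\kappa]^\perp = [e_1,\ldots,e_\nu,f_1,\ldots,f_\nu,\varepsilon]$; this subgraph is canonically identified with $Oi(2\nu+1,q)$, and the restriction lies in $E_{2\nu+1}$. Theorem \ref{ot2} then yields $k_1 \in \mathbb{F}_q^{*2}$, $k_2,\ldots,k_\nu \in \mathbb{F}_q^*$, $\delta_1 \in \mathbb{F}_3^*$ and $\pi \in {\rm Aut}(\mathbb{F}_q)$ such that $\sigma$ agrees with $\sigma_{(k_1,\ldots,k_\nu,\delta_1,\pi)}$ on every vertex contained in $[\kappa]^\perp$.

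To extend this description to vertices having nonzero $\kappa$-coordinate, I would imitate the $\pi_j$-bookkeeping from the proof of Theorem \ref{ot2}: for each basis vector $v \in \{e_1,\ldots,f_\nu,\varepsilon\}$ and each $a \in \mathbb{F}_q$, exploit adjacencies between $[v + a\kappa]$ and vertices of the form $[w + b\kappa]$ with $b \neq 0$ (for instance $(e_i + a\kappa)S\,{}^t(f_i + b\kappa) = 1 - abz$, so an edge occurs precisely when $ab = z^{-1}$) to compute the $\kappa$-coefficient of $\sigma([v + a\kappa])$ from the images already determined in the previous paragraph. A comparison across different choices of $v$ shows that these data amount to multiplying the $\kappa$-coordinate by a common factor of the form $\delta_2 \pi(\cdot)$, and orthogonality of the associated diagonal matrix ${\rm diag}(k_1,\ldots,k_\nu,k_1^{-1},\ldots,k_\nu^{-1},\delta_1,\delta_2)$ with respect to $S$ forces $\delta_2^2(-z) = -z$, hence $\delta_2 \in \mathbb{F}_3^*$. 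Consequently $\sigma = \sigma_{(k_1,\ldots,k_\nu,\delta_1,\delta_2,\pi)}$. I expect the main obstacle to be precisely this last step: isolating enough cross-adjacencies between pairs of vertices both having nonzero $\kappa$-component to force a \emph{single} $\delta_2$ uniformly across all basis directions, rather than an a priori separate scalar for each slot, and reading off that it is a square root of unity without any extra case analysis on whether $z \pm 1 \in \mathbb{F}_q^{*2}$.
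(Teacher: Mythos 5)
Your proposal tracks the paper's own proof essentially step for step: the decomposition ${\rm Aut}\big(Oi\big(2\nu+2,q\big)\big)=PO_{2\nu+2}\big(\mathbb{F}_q\big)\cdot E_{2\nu+2}$ via rescaling a basis so its Gram matrix equals $S$ and invoking \cite[Lemma 6.8]{zwG}; the identification of $\ker h'=K_{2\nu+2}$; and surjectivity by first pinning $\sigma$ down on the first $2\nu+1$ coordinates through Theorem \ref{ot2} and then chasing the $\kappa$-coordinate through cross-adjacencies (your $[e_i+a\kappa]$---$[f_i+b\kappa]$ with $ab=z^{-1}$ is the same family of relations as the paper's $[e_3+a_6e_4+z^{-1}\kappa]$---$[e_1+\kappa]$ and $[e_2+\kappa]$---$[e_4+z^{-1}\kappa]$). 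Your device of restricting $\sigma$ to the neighbourhood of $[\kappa]$, which is the induced copy of $Oi\big(2\nu+1,q\big)$ inside $[\kappa]^{\perp}$, is in fact a cleaner justification of the paper's bare appeal to Theorem \ref{ot2} at that point.

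The one step that does not hold as written is your determination of $\delta_2$. You claim that orthogonality of $D={\rm diag}(k_1,\ldots,k_\nu,k_1^{-1},\ldots,k_\nu^{-1},\delta_1,\delta_2)$ with respect to $S$ forces $\delta_2^2(-z)=-z$, hence $\delta_2=\pm1$. But the candidate automorphism is $\sigma_\pi$ followed by $D$, and $\sigma_\pi$ carries the form with block $\Delta={\rm diag}(1,-z)$ to the form with block ${\rm diag}(1,-\pi(z))$; the condition that adjacency be preserved therefore reads $d^2(-z)=-\pi(z)$ in the last slot, i.e.\ $d^2=\pi(z)z^{-1}$ for the last diagonal entry $d$. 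This is exactly what the paper extracts from the edge $[e_2+\kappa]$---$[e_4+z^{-1}\kappa]$, obtaining $d=\delta_2\sqrt{\pi(z)z^{-1}}$ with $\delta_2=\pm1$; your relation $\delta_2^2(-z)=-z$ follows only under the extra hypothesis $\pi(z)=z$. In fairness, the paper is itself inconsistent on this point (it defines $\sigma_{(k_1,\ldots,k_\nu,\delta_1,\delta_2,\pi)}$ with last diagonal entry $\delta_2$, yet its closing display carries the factor $\delta_2\sqrt{\pi(z)z^{-1}}$ in the last column), but a complete argument must carry the factor $\sqrt{\pi(z)z^{-1}}$ explicitly rather than concluding that the $\kappa$-scaling is literally a square root of unity.
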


\begin{proof} Similar to Theorem \ref{ot1}, we can show that ${\rm Aut}\big(Oi\big(2\nu+2, q\big)\big)=PO_{2\nu+2}\big(\mathbb{F}_q\big)\cdot E_{2\nu+2}$.
Define a map:
\begin{eqnarray*}
h^{'}:(\mathbb{F}_q^{*2}\times\mathbb{F}_q^*\times\cdots\times \mathbb{F}_q^*\times \mathbb{F}_3^*\times \mathbb{F}_3^*)\rtimes_{\varphi_{2\nu+2}} {\rm Aut}(\mathbb{F}_q) & \longrightarrow & E_{2\nu+2}\\
(k_1,k_2,\ldots,k_\nu,\delta_1,\delta_2,\pi) & \longrightarrow & \sigma_{(k_1,k_2,\ldots,k_\nu,\delta_1,\delta_2,\pi)}.
\end{eqnarray*}
By the definition of $\sigma_{(k_1,k_2,\ldots,k_\nu,\delta_1,\delta_2,\pi)}$, it is easy to prove  that $h^{'}$ is a group homomorphism and the kernel of $h^{'}$ is $K_{2\nu+2}$. Hence in order to show that $h$ is a group isomorphism,
it suffices to show that for any $\sigma\in E_{2\nu+2}$, there exist $k_1\in\mathbb{F}_q^{*2},k_2,\ldots,k_{\nu}\in\mathbb{F}_q^*$, $\delta_1,\delta_2\in \mathbb{F}_3^*$ and $\pi\in{\rm Aut}(\mathbb{F}_q)$ such that ~$$\sigma=\sigma_{(k_1,k_2,\ldots,k_{\nu},\delta_1,\delta_2,\pi)}.$$

Let $\sigma\in E_{2\nu+2}$. And then we will prove that there exist $k_1\in\mathbb{F}_q^{*2}$, $k_2,\ldots,k_{\nu}\in\mathbb{F}_q^*$, $\delta_1,\delta_2\in \mathbb{F}_3^*$ and $\pi\in{\rm Aut}(\mathbb{F}_{q^2})$ such that $\sigma=\sigma_{(k_1,k_2,\ldots,k_{\nu},\delta_1,\delta_2,\pi)}$.

Let $[(a_1,a_2,\ldots,a_{2\nu+1},a_{2\nu+2})]\in V\big(Oi\big(2\nu+2,q\big)\big)$ and suppose that $\sigma([(a_1,a_2,\ldots,a_{2\nu+1},a_{2\nu+2})]) =[(b_1,b_2,\ldots,b_{2\nu+1},b_{2\nu+2})]$. Similar to the proof of the case $\delta=0$ and $\delta=1$, we can conclude that
$$a_i=0\ {\rm if\ and\ only\ if}\ b_i=0\ {\rm for}\ 1\leq i\leq2\nu+2,$$
and define permutations $\pi_j$ of $\mathbb{F}_q$ with $\pi_j(0)=0$ such that
$\sigma\big([e_1+a_ke_k]\big)=[k_1e_1+\pi_k(a_k)e_k],$ $\sigma\big([e_1+a_{\nu+k}f_{k}]\big)= [k_1e_1+\pi_{\nu+k}(a_{\nu+k})f_{k}],$
$\sigma\big([e_1+a_{2\nu+1}\varepsilon]\big)= [k_1e_1+\pi_{2\nu+1}(a_{2\nu+1})\varepsilon]$
and $\sigma\big([e_1+a_{2\nu+2}\kappa]\big)= [k_1e_1+\pi_{2\nu+2}(a_{2\nu+2})\kappa],$
where $k=2,\ldots,\nu$ and $k_1^{2}=\pi_2(1)^{-1}\pi_4(1)^{-1}$.

\vskip 0.15cm

In the following proof, we need only consider two cases: $\nu=2$ and $\nu>2$.

(1) $\nu=2$. By Theorem \ref{ot2}, we know that $\sigma$ carries the vertex $[(a_1,a_2,a_3,a_4,a_5,a_6)]$ of $Oi\big(6,q\big)$ into the vertex
$$[(k_1\pi(a_1),k_2\pi(a_2), k_1^{-1}\pi(a_3),k_2^{-1}\pi(a_{4}),\delta_1\pi(a_5), x_6)],$$
where $k_1^{-1}k_2=\pi_2(1)$, $k_1^2=\pi_2(1)^{-1}\pi_4(1)^{-1}$, $\delta_1=\pi_5(1)$, $\pi=\pi_2(1)^{-1}\pi_2=\pi_4(1)^{-1}\pi_4= \pi_5(1)^{-1}\pi_5$ and $x_6$ is to be determined. 
In order to determine the value of $x_6,$ we first determine the images of ~$[e_i+a_6\kappa]$, $[f_i+a_6\kappa]$ and $[\varepsilon+a_6\kappa]$, $i=1,2$ under $\sigma$. Suppose that $\sigma([e_i+a_6\kappa])= [k_ie_i+a_6^{'}\kappa]$, $\sigma([f_i+a_6\kappa])= [k_i^{-1}f_i+a_6^{'}\kappa]$, $i=1,2$, and $\sigma([\varepsilon+a_6\kappa])= [\delta_1\varepsilon+a_6^{'}\kappa]$.

Suppose that $\sigma([e_3+a_6e_4+z^{-1}\kappa])= [k_1e_3+k_2^{-1}\pi(a_6)e_4+x\kappa]$.
Since $[e_3+a_6e_4+z^{-1}\kappa]$--- $[e_{1}+\kappa]$, we have $[k_1^{-1}e_3+k_2^{-1}\pi(a_6)e_4+x\kappa]$---$[k_1e_1+\pi_6(1)\kappa]$, i.e., $x=z^{-1}\pi_6(1)^{-1}$. Since $[e_2+a_6\kappa]$ --- $[e_3+ a_6e_4+
z^{-1}\kappa]$, we have $[k_2e_2+a_6^{'}\kappa]$ --- $[k_1^{-1}e_3+k_2^{-1}\pi(a_6)e_4 +z^{-1}\pi_6(1)^{-1}\kappa]$, i.e., $a_6^{'}= \pi_6(1)\pi(a_6)$. Thus $\sigma([e_2+a_6\kappa])=[k_2e_2+\pi_6(1)\kappa]$.

Similarly, we can prove that $\sigma([e_4+a_{6}\kappa])= [k_2^{-1}e_4+\pi_6(1)\pi(a_{6})\kappa],$ $\sigma([e_{1}+ a_{6}\kappa])=[k_1e_{1}+ \pi_6(1)\pi(a_6)\kappa],$ $\sigma([e_3+ a_{6}\kappa])= [k_1^{-1}e_3+\pi_6(1)\pi(a_{6})\kappa]$ and $\sigma([\varepsilon+a_{6}\kappa])= [\delta_1\varepsilon+\pi_6(1)\pi(a_{6})\kappa].$


We know that $[e_2+\kappa]$---$[e_4+z^{-1}\kappa]$, and then $[k_2e_2+\pi_6(1)\kappa]$---$[k_2^{-1}e_4+\pi(z^{-1})\pi_6(1)\kappa]$. Thus there exists $\delta\in \mathbb{F}_3^*$ such that $\pi_6(1)=\delta\sqrt{\pi(z)z^{-1}}.$ Moreover, it is easy to check that
$$\sigma([(a_1,a_2,a_3,a_4,,a_5,a_6)])=
[(k_1\pi(a_1),k_2\pi(a_2), k_1^{-1}\pi(a_3),k_2^{-1}\pi(a_{4}),\delta_1\pi(a_5), \pi_6(1)\pi(a_6))].$$

Lastly, Let $k_1^{-1}k_2=\pi_2(1)$, $k_1=\pi_2(1)^{-1}\pi_4(1)^{-1}$, $\delta_1=\pi_5(1)$, $\delta_2=\delta$ and $\pi=\pi_2(1)^{-1}\pi_2=\pi_4(1)^{-1}\pi_4= \pi_5(1)^{-1}\pi_5=\pi_6(1)^{-1}\pi_6$. Then for $A=\big(a_{ij}\big)_{m\times6}\in V\big(Oi\big(6,q\big)\big)$,
\begin{displaymath}
\sigma\big(A\big)=\left(
\begin{array}{cccccc}
k_1\pi(a_{11}) & k_2\pi(a_{12}) &k_1^{-1}\pi(a_{13}) &k_2^{-1}\pi(a_{14}) & \delta_1\pi(a_{15})& \delta_2\sqrt{\pi(z)z^{-1}}\pi(a_{16}) )\\
\vdots&\vdots & \vdots& \vdots& \vdots & \vdots\\
k_1\pi(a_{m1}) & k_2\pi(a_{m2}) &k_1^{-1}\pi(a_{m3}) & k_2^{-1}\pi(a_{m4})&\delta_1\pi(a_{15})& \delta_2\sqrt{\pi(z)z^{-1}}\pi(a_{m6}) )
  \end{array}
\right)
\end{displaymath}
\begin{displaymath}
=\,\sigma_{(k_1,k_2,\delta_1,\delta_2,\pi)}\big(A\big).\ \ \ \ \ \ \ \ \ \ \ \ \ \ \ \ \ \ \ \ \ \ \ \ \ \ \ \ \ \ \ \ \ \ \ \ \ \ \ \ \ \ \ \ \ \ \ \ \ \ \ \ \ \ \ \ \ \ \ \ \ \ \ \ \ \ \ \ \ \ \ \ \ \ \ \ \ \ \ \ \ \
\end{displaymath}
So $\sigma=\sigma_{(k_1,k_2,\delta_1,\delta_2,\pi)}.$
\vskip 1.5mm

(2) $\nu>2$. Similar to the case of $\nu=2$, we can prove that there exist $k_1\in\mathbb{F}_q^{*2},k_2\ldots,k_{\nu}\in \mathbb{F}_q^*, \delta_1,\delta_2\in\mathbb{F}_3^{*2}$ and $\pi\in{\rm Aut}(\mathbb{F}_q)$ such that $\sigma=\sigma_{(k_1,k_2\ldots,k_{\nu},\delta_1,\delta_2,\pi)}.$
\end{proof}

\begin{Corollary} Let $\nu\geq2$ and $F_q$ be a field of characteristic $p$. Then
\begin{displaymath}
\big|{\rm Aut}\big(Oi\big(2\nu+2,q\big)\big)\big|=\frac{1}{2}q^{\nu(\nu+1)} \mathop{\prod}\limits_{i=1}^{\nu}(q^i-1) \mathop{\prod}\limits_{i=1}^{\nu+1}(q^i+1)[\mathbb{F}_q:\mathbb{F}_p].
\end{displaymath}
\end{Corollary}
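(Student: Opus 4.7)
The plan is to apply the formula
\[
\big|{\rm Aut}(Oi(2\nu+2,q))\big| = \frac{|PO_{2\nu+2}(\mathbb{F}_q)| \cdot |E_{2\nu+2}|}{|PO_{2\nu+2}(\mathbb{F}_q) \cap E_{2\nu+2}|},
\]
which follows from the decomposition ${\rm Aut}(Oi(2\nu+2,q)) = PO_{2\nu+2}(\mathbb{F}_q) \cdot E_{2\nu+2}$ of Theorem \ref{ot3} together with the standard identity $|HK| = |H|\,|K|/|H \cap K|$ for subgroups. The proof then reduces to evaluating each of the three factors.

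For $|PO_{2\nu+2}(\mathbb{F}_q)|$: the block $\Delta = {\rm diag}(1,-z)$ is anisotropic of dimension $2$, so the ambient form has Witt index $\nu$ in dimension $2\nu+2$. Invoking \cite[Theorem 6.21]{zwG} gives
\[
|O_{2\nu+2}(\mathbb{F}_q)| = 2q^{\nu(\nu+1)}(q^{\nu+1}+1)\prod_{i=1}^{\nu}(q^{2i}-1) = 2q^{\nu(\nu+1)}\prod_{i=1}^{\nu}(q^i-1)\prod_{i=1}^{\nu+1}(q^i+1),
\]
and dividing by $|\{\pm I\}|=2$ (using that $-1 \in \mathbb{F}_q^{*2}$ ensures $-I \in O_{2\nu+2}(\mathbb{F}_q)$) yields $|PO_{2\nu+2}(\mathbb{F}_q)|$. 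For $|E_{2\nu+2}|$: the isomorphism $h$ of Theorem \ref{ot3} reduces the enumeration to counting the semidirect product $(\mathbb{F}_q^{*2} \times (\mathbb{F}_q^*)^{\nu-1} \times \mathbb{F}_3^* \times \mathbb{F}_3^*) \rtimes {\rm Aut}(\mathbb{F}_q)$, whose order is $\tfrac{q-1}{2} \cdot (q-1)^{\nu-1} \cdot 4 \cdot [\mathbb{F}_q:\mathbb{F}_p]$, and then dividing by $|K_{2\nu+2}|=2$.

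For the denominator, I would observe that any $\sigma \in PO_{2\nu+2}(\mathbb{F}_q) \cap E_{2\nu+2}$, when pulled back along $h^{-1}$, must have Galois component $\pi = 1_{\mathbb{F}_q}$: right-multiplication by a matrix over $\mathbb{F}_q$ acts entry-wise by $\mathbb{F}_q$-linear combinations and cannot realize a non-trivial field automorphism. Counting the resulting cosets $(k_1,\ldots,k_\nu,\delta_1,\delta_2,1_{\mathbb{F}_q})K_{2\nu+2}$ gives $|PO_{2\nu+2}(\mathbb{F}_q) \cap E_{2\nu+2}| = (q-1)^\nu$. Substituting into the order formula, the $(q-1)^\nu$ factors cancel and the stated product remains. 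The main subtlety is the bookkeeping of the several factors of $2$ --- one from $|PO|=\tfrac{1}{2}|O|$, one from $|\mathbb{F}_q^{*2}|=\tfrac{q-1}{2}$, one from each copy of $\mathbb{F}_3^*$, and one from $|K_{2\nu+2}|$ --- but otherwise the argument is a direct analogue of the proof of Corollary \ref{3.3}, with the two extra $\mathbb{F}_3^*$ factors accounting for the images of $[\varepsilon]$ and $[\kappa]$.
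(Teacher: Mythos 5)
Your overall strategy is exactly the paper's: the paper's proof of this corollary is literally ``similar to that of Corollary \ref{3.3}'', i.e.\ the decomposition ${\rm Aut}=PO\cdot E$ from Theorem \ref{ot3} combined with $|HK|=|H||K|/|H\cap K|$ and an evaluation of the three factors, which is precisely what you propose.

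However, the factors you compute do not multiply out to the stated formula, so the final step ``the $(q-1)^\nu$ factors cancel and the stated product remains'' does not go through as written. With your own values: $|PO_{2\nu+2}(\mathbb{F}_q)|=\tfrac12\cdot 2q^{\nu(\nu+1)}\prod_{i=1}^{\nu}(q^i-1)\prod_{i=1}^{\nu+1}(q^i+1)=q^{\nu(\nu+1)}\prod_{i=1}^{\nu}(q^i-1)\prod_{i=1}^{\nu+1}(q^i+1)$; $|E_{2\nu+2}|=\tfrac{q-1}{2}(q-1)^{\nu-1}\cdot 4\cdot[\mathbb{F}_q:\mathbb{F}_p]/2=(q-1)^{\nu}[\mathbb{F}_q:\mathbb{F}_p]$; and $|PO_{2\nu+2}(\mathbb{F}_q)\cap E_{2\nu+2}|=(q-1)^{\nu}$. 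The quotient is then $|PO_{2\nu+2}(\mathbb{F}_q)|\cdot[\mathbb{F}_q:\mathbb{F}_p]$, which is \emph{twice} the right-hand side of the corollary; the leading $\tfrac12$ is not accounted for. So either one of your three counts is off by $2$ or the stated constant needs rechecking --- you cannot simply assert the cancellation. (Two further points deserve care. First, the same tension already occurs in the paper's own Corollary \ref{3.3}, case $-1\in\mathbb{F}_q^{*2}$, where $|PO_{2\nu}|\,|E_{2\nu}|/|PO_{2\nu}\cap E_{2\nu}|$ evaluates to $q^{\nu(\nu-1)}\prod_{i=1}^{\nu}(q^i-1)\prod_{i=1}^{\nu-1}(q^i+1)[\mathbb{F}_q:\mathbb{F}_p]$ rather than half of it, so ``similar to Corollary \ref{3.3}'' does not resolve the discrepancy. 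Second, your count of the intersection relies on the parametrization of $E_{2\nu+2}$ by tuples with $k_1\in\mathbb{F}_q^{*2}$; but every diagonal orthogonal matrix ${\rm diag}(t_1,\ldots,t_\nu,t_1^{-1},\ldots,t_\nu^{-1},\pm1,\pm1)$ with arbitrary $t_1\in\mathbb{F}_q^{*}$ induces an element of $PO_{2\nu+2}(\mathbb{F}_q)\cap E_{2\nu+2}$, and since $-1\in\mathbb{F}_q^{*2}$ here, negating such a matrix does not move $t_1$ out of the non-square class; counting the intersection directly from the matrices gives $2(q-1)^{\nu}$, not $(q-1)^{\nu}$. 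You should reconcile these two counts before claiming the denominator.)
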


\begin{proof}
The proof is similar to that of Corollary \ref{3.3}.
\end{proof}

\section{The action of Automorphism groups of orthogonal inner graphs of odd characteristic}

\begin{Theorem}\label{4.1} Let $Oi\big(2\nu+\delta,q\big)$ be the orthogonal inner product graph over $\mathbb{F}_q$ and ~$\mathcal{M}(m,2s+\gamma,s,\Gamma;2\nu+\delta,\Delta)\neq\varnothing$ for $1\leq m<2\nu+\delta$. Then $\mathcal{M}(m,2s+\gamma,s,\Gamma;2\nu+\delta,\Delta)$ is exactly one orbit of $V\big(Oi\big(2\nu+\delta,q\big)\big)$ under the action of  ${\rm Aut}\big(Oi\big(2\nu+\delta,q\big)\big)$.
\end{Theorem}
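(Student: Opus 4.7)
The plan is to prove both containments separately: (a) subspaces of the same type lie in the same $\mathrm{Aut}$-orbit, and (b) each orbit consists of subspaces of a single type.

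For direction (a), I would invoke the classical transitivity theorem from Wan \cite{zwG}: the group $O_{2\nu+\delta}(\mathbb{F}_q)$ acts transitively on $\mathcal{M}(m,2s+\gamma,s,\Gamma;2\nu+\delta,\Delta)$. Since Proposition~\ref{2.4} embeds $PO_{2\nu+\delta}(\mathbb{F}_q)$ as a subgroup of ${\rm Aut}(Oi(2\nu+\delta,q))$, this transitivity lifts at once, and any two subspaces of the same type lie in a common ${\rm Aut}$-orbit.

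For direction (b), I would use the structure theorems of Section~3 to write every $\sigma \in {\rm Aut}(Oi(2\nu+\delta,q))$ as a product $\sigma = \sigma_T \cdot \tau$ with $\sigma_T \in PO_{2\nu+\delta}(\mathbb{F}_q)$ (from Theorems~\ref{ot1}, \ref{ot2}, \ref{ot3}) and $\tau \in E_{2\nu+\delta}$. The factor $\sigma_T$ preserves the Gram matrix exactly, since $(PT)S\,{}^t(PT) = P(TS\,{}^tT)\,{}^tP = PS\,{}^tP$, hence preserves the type. It remains to show that every $\tau \in E_{2\nu+\delta}$ preserves types. Using the explicit parametrization, $\tau(A) = \sigma_\pi(A)\cdot D$, where $D$ is a diagonal orthogonal matrix (so $DS\,{}^tD = S$) and $\sigma_\pi$ is the entrywise action of a field automorphism $\pi \in {\rm Aut}(\mathbb{F}_q)$. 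Thus $\tau(P)S\,{}^t\tau(P) = \sigma_\pi(P)S\,{}^t\sigma_\pi(P)$. In the cases $\delta = 0,1$, every entry of $S$ lies in $\{0,1\}$ and is fixed by $\pi$, so this equals $\pi(PS\,{}^tP)$, a matrix cogredient to $PS\,{}^tP$ because $\pi$ preserves rank and sends $\mathbb{F}_q^{*2}$ to itself; thus $\tau(P)$ has the same type as $P$.

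The main obstacle is the $\delta = 2$ case, where $S_{2\nu+2,\Delta}$ carries the non-square $-z$ as its last diagonal entry, and a general $\pi$ need not fix $z$. To handle this, I would incorporate the compensating scalar $\sqrt{\pi(z)z^{-1}}$ into the last coordinate of $D$, exactly as in the explicit computation at the end of the proof of Theorem~\ref{ot3}; this square root exists in $\mathbb{F}_q^{*}$ because the quotient of two non-squares is a square. The compensation ensures that $\tau(P)S\,{}^t\tau(P)$ remains cogredient to $\pi(PS\,{}^tP)$, so the type is preserved, completing the argument.
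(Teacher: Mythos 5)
Your proposal is correct and takes essentially the same route as the paper: one inclusion comes from Wan's transitivity of $O_{2\nu+\delta}(\mathbb{F}_q)$ on each type class together with Proposition 2.4, and the other from the decomposition ${\rm Aut}\big(Oi\big(2\nu+\delta,q\big)\big)=PO_{2\nu+\delta}\big(\mathbb{F}_q\big)\cdot E_{2\nu+\delta}$ plus the observation that a field automorphism preserves the square classes of $\mathbb{F}_q^*$ (so $\pi(z)$ is again a non-square and the Gram matrix stays in its cogredience class). If anything, your write-up is more careful than the paper's, which compresses the decomposition into the loose claim that $\sigma=\sigma_{(k_1,\ldots,k_\nu,\pi)}$ and only sketches the $\delta=1,2$ cases, including the compensating factor $\sqrt{\pi(z)z^{-1}}$ that you treat explicitly.
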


\begin{proof}
By \cite[Lemma 6.4]{zwG} and Proposition \ref{2.4}, we know that for any ~$A,B\in\mathcal{M}(m,2s+\gamma,s,\Gamma;2\nu+\delta,\Delta)$, there exists ~$T\in\mathcal{F}_{2\nu+\delta,\Delta}(q)$ such that $\sigma_T(A)=\sigma_T(B).$
Thus ${\rm Aut}\big(Oi\big(2\nu+\delta,q\big)\big)$ is transitive on $\mathcal{M}(m,2s+\gamma,s,\Gamma;2\nu+\delta,\Delta)$.

In order to prove that $\mathcal{M}(m,2s+\gamma,s,\Gamma;2\nu+\delta,\Delta)$ is exactly one orbit of $V\big(Oi\big(2\nu+\delta,q\big)\big)$ under the action of  ${\rm Aut}\big(Oi\big(2\nu+\delta,q\big)\big)$, we need only need to show that  ~$\sigma\in{\rm Aut}\big(Oi\big(2\nu+\delta,q\big)\big)$, we have
$\sigma(\mathcal{M}(m,2s+\gamma,s,\Gamma;2\nu+\delta,\Delta))= \mathcal{M}(m,2s+\gamma,s,\Gamma;2\nu+\delta,\Delta).$ In the following proof, we will consider three cases: $\delta=0,1$ and $2$.

Let $\delta=0$, $\sigma\in{\rm Aut}\big(Oi\big(2\nu,q\big)\big)$ and ~$A\in\sigma(\mathcal{M}(m,2s+\gamma,s,\Gamma;2\nu,\Delta)$.

By Theorem \ref{ot1}, we know that there exist $k_1\in\mathbb{F}_q^{*2},~k_2,\ldots,k_{\nu}\in\mathbb{F}_q^*$ and $\pi\in{\rm Aut}(\mathbb{F}_q)$ such that $\sigma=\sigma_{(k_1,k_2,\ldots,k_{\nu},\pi)}.$
By the knowledge of  the finite field, we know that for non-square element $z\in\mathbb{F}_q$, there exists $a\in\mathbb{F}_q^{*2}$ such that $z=a\pi(z)$. If $AS\,{^t\!A}$ is cogredient to $M(m_1,2s_1+\gamma_1,s_1,\Gamma_1)$, then it is easy to check that $\sigma(A)S\,{^t\!(\sigma(A))}$ is cogredient to $M(m_1,2s_1+\gamma_1,s_1,\Gamma_1).$
So $A$ and $\sigma(A)$ are the same type orthogonal subspace. Thus we have
$\sigma(\mathcal{M}(m,2s+\gamma,s,\Gamma;2\nu,\Delta))= \mathcal{M}(m,2s+\gamma,s,\Gamma;2\nu,\Delta).$

Similar to the case of $\delta=0$, we can prove that
$\sigma(\mathcal{M}(m,2s+\gamma,s,\Gamma;2\nu+\delta,\Delta))= \mathcal{M}(m,2s+\gamma,s,\Gamma;2\nu+\delta,\Delta)$ when $\delta=1$ and $\delta=2$.

Thus $\mathcal{M}(m,2s+\gamma,s,\Gamma;2\nu+\delta,\Delta)$ is exactly one orbit of $V\big(Oi\big(2\nu+\delta,q\big)\big)$ under the action of  ${\rm Aut}\big(Oi\big(2\nu+\delta,q\big)\big)$.
\end{proof}

For $A\in Oi\big(2\nu+\delta,q\big)$, we define
$t(A)=(m,2s+\gamma,s,\Gamma;2\nu+\delta,\Delta)$ if the type of $A$ is $(m,2s+\gamma,s,\Gamma;2\nu+\delta,\Delta)$ and also define
\begin{displaymath}
\phi(A)= \left\{ \begin{array}{ll}
0,     & {\rm\ if\ |A|\ is\ a\ non\text{-square}\ element},\\
1,     &{\rm\ if\ |A|\ is\ a\ square\ element}.
\end{array} \right.
\end{displaymath}
For $a\in\mathbb{F}_q^*$, we set $a^0=1$ and $a^1=a$.

\begin{Proposition}\label{4.2} Let $Oi\big(2\nu+\delta,q\big)$ be the orthogonal inner product graph over $\mathbb{F}_q$ and $X_1\text{---}X_2,\linebreak[4]Y_1\text{---}Y_2\in E\big(Oi\big(2\nu+\delta,q\big)\big).$ Then $X_1$---$X_2$ and $Y_1$---$Y_2$ are in the same orbit of $E\big(Oi\big(2\nu+\delta,q\big)\big)$ under the action of $O_{2\nu+\delta,\Delta}\big(\mathbb{F}_q\big)$ if and only if one of the following is true$:$ $(1)$ $t(X_1)=t(Y_1),$ $t(X_2)=t(Y_2),$ $t(X_1+X_2)=t(Y_1+Y_2);$ $(2)$ $t(X_1)=t(Y_2),$ $t(X_2)=t(Y_1),$ $t(X_1+X_2)=t(Y_1+Y_2).$
\end{Proposition}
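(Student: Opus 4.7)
For the \emph{necessity} direction, suppose $T\in O_{2\nu+\delta,\Delta}(\mathbb{F}_q)$ satisfies $\sigma_T(\{X_1,X_2\})=\{Y_1,Y_2\}$. Right multiplication by $T$ commutes with subspace addition and preserves types by Theorem \ref{4.1}, so $\sigma_T(X_1+X_2)=Y_1+Y_2$ has type $t(X_1+X_2)$; whether $\sigma_T(X_1)=Y_1$ or $\sigma_T(X_1)=Y_2$ then selects case (1) or case (2).

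For the \emph{sufficiency}, assume condition (1) (condition (2) reduces to this by relabeling $Y_1\leftrightarrow Y_2$). The plan is a two-step reduction. First, since $t(X_1+X_2)=t(Y_1+Y_2)$, Theorem \ref{4.1} supplies $T_1\in O_{2\nu+\delta,\Delta}(\mathbb{F}_q)$ with $\sigma_{T_1}(X_1+X_2)=Y_1+Y_2$. Replacing each $X_i$ by $\sigma_{T_1}(X_i)$, I may assume $W:=X_1+X_2=Y_1+Y_2$ while keeping $t(X_i)=t(Y_i)$ and $X_1 S\,^{t}\!X_2=Y_1 S\,^{t}\!Y_2=0$ in force. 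It then suffices to produce $T_2\in O_{2\nu+\delta,\Delta}(\mathbb{F}_q)$ with $\sigma_{T_2}(X_i)=Y_i$, so that $T_2T_1$ witnesses the required edge equivalence.

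To construct $T_2$, set $U=X_1\cap X_2$. The edge relation $X_1 S\,^{t}\!X_2=0$ forces $U\subseteq\text{rad}(X_1)\cap\text{rad}(X_2)$, and the dimension identity $\dim U=\dim X_1+\dim X_2-\dim W$ combined with $t(W)=t(Y_1+Y_2)$ yields $\dim U=\dim(Y_1\cap Y_2)$. I would then choose a basis of $X_1$ in three layers — a basis of $U$, an extension to a basis of $\text{rad}(X_1)$, and a basis of a nondegenerate complement whose Gram matrix is the standard form $M(2s_1+\gamma_1,s_1,\Gamma_1)$ — and mirror the construction for $X_2$, re-using the same chosen basis of $U$. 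A direct check (using $(R_1+Q_1)\cap U=0$, $(R_2+Q_2)\cap U=0$, and $(R_1+Q_1)\cap(R_2+Q_2)\subseteq X_1\cap X_2=U$) shows the concatenated list is a basis of $W$. Its Gram matrix is block-diagonal: the blocks indexed by $U$ and by the two radical completions vanish because those vectors lie in radicals, the cross blocks between $X_1$-side and $X_2$-side vectors vanish by $X_1 S\,^{t}\!X_2=0$, and the two surviving diagonal blocks are precisely the standard forms prescribed by $t(X_1),t(X_2)$. Performing the analogous construction on $(Y_1,Y_2)$ yields a second basis of $W$ with identical block dimensions (by the matching types) and identical Gram matrix, so the linear map $\phi:W\to W$ sending one basis to the other block-by-block is a form-preserving bijection with $\phi(X_i)=Y_i$. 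Witt's extension theorem, in the form of \cite[Lemma 6.4]{zwG} already invoked in the proof of Theorem \ref{4.1}, then lifts $\phi$ to the desired $T_2\in O_{2\nu+\delta,\Delta}(\mathbb{F}_q)$.

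The central obstacle is moving from single-subspace transitivity (Theorem \ref{4.1}) to joint transitivity on an orthogonal pair. The key geometric input making it work is that the edge condition $X_1 S\,^{t}\!X_2=0$ pushes $X_1\cap X_2$ into both radicals, which is what forces the Gram matrix of $X_1+X_2$ in the adapted basis to depend only on the triple $(t(X_1),t(X_2),\dim(X_1\cap X_2))$ — equivalently, by the dimension identity, only on $(t(X_1),t(X_2),t(X_1+X_2))$. Verifying that the layered bases can actually be chosen compatibly (so their concatenation has the full dimension and the claimed block pattern) is the technically delicate verification; once this normal form is established, Witt's extension theorem finishes the argument.
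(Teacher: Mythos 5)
Your proof is correct and follows essentially the same route as the paper's: the necessity is the same direct computation, and the sufficiency rests on the same construction of bases of $X_1$ and $X_2$ adapted to $X_1\cap X_2$ (which the edge condition places in both radicals), a block-diagonal normal form for the Gram matrix of $X_1+X_2$ determined by the three types, and the Witt extension lemma \cite[Lemma 6.8]{zwG} to produce the orthogonal matrix. Your preliminary reduction to $X_1+X_2=Y_1+Y_2$ is harmless but redundant, since the extension lemma already handles the two sums sitting anywhere in the ambient space; note also that the extension step is \cite[Lemma 6.8]{zwG}, not Lemma 6.4 (which is the single-subspace transitivity used in Theorem \ref{4.1}).
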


\begin{proof} ($\Rightarrow$) Suppose that $X_1$---$X_2$ and $Y_1$---$Y_2$ are in the same orbit of $E\big(Oi\big(2\nu+\delta,q\big)\big)$ under the action of $O_{2\nu+\delta,\Delta}(\mathbb{F}_q)$. Then there exists $T\in O_{2\nu+\delta,\Delta}(\mathbb{F}_q)$ such that one of the following is true: $(1) X_1T=Y_1,$ $X_2T=Y_2$; $(2)X_1T=Y_2,$ $X_2T=Y_1.$ Without loss of generality we can assume that $X_1T=Y_1$ and $X_2T=Y_2$. Then $t(X_1)=t(Y_1)$ and $t(X_2)=t(Y_2)$ by Proposition \ref{2.4} and Theorem \ref{4.1}. It is sufficient to prove $t(X_1+X_2)=t(Y_1+Y_2)$. Let $t(X_1)=t(Y_1)=(m_1,2s_1+\gamma_1,s_1,\Gamma_1;2\nu+\delta,\Delta)$ and $t(X_2)=t(Y_2)=(m_2,2s_2+\gamma_2,s_2,\Gamma_2;2\nu+\delta,\Delta)$. Then we have
\begin{displaymath}
  X_1T=\begin{bmatrix} X_{11} \\ \vdots \\ X_{1m_1}\end{bmatrix}T=
  \begin{bmatrix} Y_{11} \\ \vdots \\ Y_{1m_1}\end{bmatrix}=Y_1
  {\rm\ and\ }X_2T=\begin{bmatrix} X_{21} \\ \vdots \\ X_{2m_2}\end{bmatrix}T=
  \begin{bmatrix} Y_{21} \\ \vdots \\ Y_{2m_2}\end{bmatrix}=Y_2.
\end{displaymath}
It is easy to check that $(X_1+X_2)T=(Y_1+Y_2)$. By Lemma \cite[Lemma 6.4]{zwG}, we have $t(X_1+X_2)=t(Y_1+Y_2)$.
 \vskip 0.13cm

When $X_1T=Y_2$ and $X_2T=Y_1,$ we conclude similarly that $t(X_1)=t(Y_2),$ $t(X_2)=t(Y_1)$ and $t(X_1+X_2)=t(Y_1+Y_2).$
\vskip 0.15cm

($\Leftarrow$) Suppose that one of the following is true: $(1)$ $t(X_1)=t(Y_1),$ $t(X_2)=t(Y_2),$ $t(X_1+X_2)=t(Y_1+Y_2)$; $(2)$ $t(X_1)=t(Y_2),$ $t(X_2)=t(Y_1),$ $t(X_1+X_2)=t(Y_1+Y_2)$. Without loss of generality we can assume that $t(X_1)=t(Y_1)=(m_1,r_1),$ $t(X_2)=t(Y_2)=(m_2,r_2)$ and $t(X_1+X_2)=t(Y_1+Y_2)=(m,r).$ Then there exist matrix representations
\begin{displaymath}
  \begin{bmatrix} \alpha_{1} \\ \vdots \\ \alpha_{m-m_2} \\ \gamma_1 \\ \vdots \\ \gamma_{m_1+m_2-m}\end{bmatrix},~ \begin{bmatrix}\beta_{1} \\ \vdots \\ \beta_{m-m_1} \\ \gamma_1 \\ \vdots \\ \gamma_{m_1+m_2-m}\end{bmatrix}~\text{and}~ \begin{bmatrix} \gamma_1 \\ \vdots \\ \gamma_{m_1+m_2-m}\end{bmatrix}
\end{displaymath}
of $X_1,$ $X_2$ and $X_1\bigcap X_2$ respectively such that
\begin{displaymath}
 X_1+X_2=\begin{bmatrix} \alpha_{1} \\ \vdots \\ \alpha_{m-m_2} \\ \beta_{1}\\ \vdots \\ \beta_{m-m_1} \\ \gamma_1 \\ \vdots \\ \gamma_{m_1+m_2-m}\end{bmatrix},
\end{displaymath}
and
$$\alpha_iS\,{^t\!\beta_j}=0,\: \alpha_iS\,{^t\!\gamma_k}=0,\: \beta_jS\,{^t\!\gamma_k}=0$$
where $1\leq i\leq m-m_2$, $1\leq j\leq m-m_1$ and $1\leq k\leq m_1+m_2-m$. It is easy to verify that the type of $t(X_1\bigcap X_2)$ is determine by $X_1$ and $X_2$. We can choose a suitable basis
$$\alpha_{1},\ldots,\alpha_{m-m_2},~\beta_{1},\ldots,\beta_{m-m_1},~ \gamma_1,\ldots,\linebreak[4]\gamma_{m_1+m_2-m}$$
of $X_1+X_2$
such that $(X_1+X_2)S\,{^t(X_1+X_2)}=$
$${\rm diag}(x_1,\ldots,x_{r_1-1},z^{\phi(X_1)},0,\ldots,0,x_1,\ldots,x_{r_2-1},z^{\phi(X_2)}, 0,\ldots,0,x_1,\ldots,x_{r_3-1},z^{\phi(X_3)}0,\ldots,0),$$
where $x_h=1$, $1\leq h<\max\{r_1,\: r_2,\: r_3\}$. 

Similarly, we can prove that there exists a matrix representation
\begin{displaymath}
 Y_1+Y_2=\begin{bmatrix}\alpha_{1}^{'} \\ \vdots \\ \alpha_{m-m_2}^{'} \\ \beta_{1}^{'}\\ \vdots \\ \beta_{m-m_1}^{'} \\ \gamma_1^{'} \\ \vdots \\ \gamma_{m_1+m_2-m}^{'}\end{bmatrix},
\end{displaymath}
of $Y_1+Y_2$ such that
\begin{displaymath}
 Y_1=\begin{bmatrix}\alpha_{1}^{'} \\ \vdots \\ \alpha_{m-m_2}^{'} \\  \gamma_1^{'} \\ \vdots \\ \gamma_{m_1+m_2-m}^{'}\end{bmatrix}, ~~ Y_2=\begin{bmatrix}\beta_{1}^{'}\\ \vdots \\ \beta_{m-m_1}^{'} \\ \gamma_1^{'} \\ \vdots \\ \gamma_{m_1+m_2-m}^{'}\end{bmatrix},
\end{displaymath}
and
$(Y_1+Y_2)S\,{^t(Y_1+Y_2)}=$
$${\rm diag}(y_1,\ldots,y_{r_1-1},z^{\phi(Y_1)},0,\ldots,0,y_1,\ldots,y_{r_2-1},z^{\phi(Y_2)}, 0,\ldots,0,y_1,\ldots,y_{r_3-1},z^{\phi(Y_3)}0,\ldots,0),$$
where $y_h=1$, $1\leq h< \max\{r_1,\: r_2,\: r_3\}$ and $\phi(X_i)=\phi(Y_i),i=1,2,3$. 

By \cite[Lemma 6.8]{zwG}, there exists $T\in O_{2\nu+\delta,\Delta}\big(\mathbb{F}_q\big)$ such that $(X_1+X_2)T=Y_1+Y_2$, $X_1T=Y_1$ and $X_2T=Y_2$. So $X_1$---$X_2$ and $Y_1$---$Y_2$ are in the same orbit of $E\big(Oi\big(2\nu+\delta,q\big)\big)$ under the action of $O_{2\nu+\delta,\Delta}\big(\mathbb{F}_q\big)$.
\vskip 0.13cm

When $t(X_1)=t(Y_2),$ $t(X_2)=t(Y_1)$ and $t(X_1+X_2)=t(Y_1+Y_2),$ we conclude similarly that $X_1$---$X_2$ and $Y_1$---$Y_2$ are in the same orbit of $E\big(Oi\big(2\nu+\delta,q\big)\big)$ under the action of $O_{2\nu+\delta,\Delta}\big(\mathbb{F}_q\big)$.
\end{proof}

\vskip 0.3cm

\begin{Theorem} \label{4.3} Let $Oi\big(2\nu+\delta,q\big)$ be the orthogonal inner product graph over $\mathbb{F}_q$ and $X_1\text{---}X_2,Y_1\text{---}Y_2\in E\big(Oi\big(2\nu+\delta,q\big)\big).$ Then $X_1$---$X_2$ and $Y_1$---$Y_2$ are in the same orbit of $E\big(Oi\big(2\nu+\delta,q\big)\big)$ under the action of ${\rm Aut}\big(Oi\big(2\nu+\delta,q\big)\big)$ if and only if one of the following is true$:$ $(1)$ $t(X_1)=t(Y_1),$ $t(X_2)=t(Y_2),$ $t(X_1+X_2)=t(Y_1+Y_2);$ $(2)$ $t(X_1)=t(Y_2),$ $t(X_2)=t(Y_1),$ $t(X_1+X_2)=t(Y_1+Y_2).$
\end{Theorem}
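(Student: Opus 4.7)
The backward direction is immediate: if condition (1) or (2) holds, Proposition~\ref{4.2} supplies some $T\in O_{2\nu+\delta,\Delta}(\mathbb{F}_q)$ effecting the edge equivalence, and then $\sigma_T\in{\rm Aut}(Oi(2\nu+\delta,q))$ by Proposition~\ref{2.4}, so the two edges lie in a common ${\rm Aut}(Oi(2\nu+\delta,q))$-orbit.

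For the forward direction, suppose $\sigma\in{\rm Aut}(Oi(2\nu+\delta,q))$ with $\{\sigma(X_1),\sigma(X_2)\}=\{Y_1,Y_2\}$; relabelling if necessary interchanges case (1) with case (2), so we may assume $\sigma(X_1)=Y_1$ and $\sigma(X_2)=Y_2$. Theorem~\ref{4.1} yields $t(X_i)=t(Y_i)$ for $i=1,2$, and the remaining task is to prove $t(X_1+X_2)=t(Y_1+Y_2)$. The plan is to show that every $\sigma\in{\rm Aut}(Oi(2\nu+\delta,q))$ simultaneously (i) commutes with subspace addition, i.e.\ $\sigma(X_1+X_2)=\sigma(X_1)+\sigma(X_2)$, and (ii) preserves the type of an \emph{arbitrary} subspace (not merely of a graph vertex). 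Granting these, $t(Y_1+Y_2)=t(\sigma(X_1)+\sigma(X_2))=t(\sigma(X_1+X_2))=t(X_1+X_2)$, completing the argument.

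Both (i) and (ii) follow from the decomposition $\sigma=\sigma_T\cdot\sigma_E$ furnished by Theorems~\ref{ot1}, \ref{ot2}, \ref{ot3}, where $\sigma_T\in PO_{2\nu+\delta}(\mathbb{F}_q)$ and $\sigma_E$ has the explicit form $A\mapsto\pi(A)\cdot D$ for a field automorphism $\pi\in{\rm Aut}(\mathbb{F}_q)$ and a diagonal orthogonal matrix $D$ assembled from the parameters $k_1,\ldots,k_\nu$ (and, when $\delta\geq 1$, the anisotropic scalars $\delta_1$, $\delta_2$). Each factor extends from vertices to all subspaces of $\mathbb{F}_q^{(2\nu+\delta)}$: $\sigma_T$ is $\mathbb{F}_q$-linear and so preserves sums, and preserves Gram matrices exactly because $TS\,{}^tT=S$; $\sigma_E$ preserves sums because $\pi$ is additive and right-multiplication by $D$ is $\mathbb{F}_q$-linear, and converts the Gram matrix $AS\,{}^tA$ into $\pi(AS\,{}^tA)$ because $DS\,{}^tD=S$. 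Since every field automorphism bijectively preserves the square/non-square partition of $\mathbb{F}_q^*$, it preserves the cogredience class of any nonsingular symmetric matrix over $\mathbb{F}_q$, hence the type of every subspace, giving (ii).

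The principal technical point is the careful bookkeeping of $D$ across the regimes $\delta=0,1,2$: when $\delta\geq 1$ the extra anisotropic entries $\delta_1$ (and $\delta_2$) must still satisfy $DS\,{}^tD=S$ and leave the cogredience class of the Gram matrix $\pi$-equivariantly invariant, and in the $\delta=2$ case the factor $\sqrt{\pi(z)z^{-1}}$ appearing in Theorem~\ref{ot3} must be absorbed into the argument. A minor subtlety is that $X_1+X_2$ may equal the whole space $\mathbb{F}_q^{(2\nu+\delta)}$ and so fail to be a vertex of $Oi(2\nu+\delta,q)$; this causes no trouble because $t(\cdot)$ is an intrinsic invariant of the subspace, available whether or not the subspace lies in $V\big(Oi(2\nu+\delta,q)\big)$.
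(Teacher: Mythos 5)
Your proof is correct and follows essentially the same route as the paper: the backward direction via Proposition~\ref{4.2} together with Proposition~\ref{2.4}, and the forward direction via Theorem~\ref{4.1} for the endpoint types plus the decomposition $\sigma=\sigma_T\cdot\sigma_E$ from Theorems~\ref{ot1}--\ref{ot3} to see that $\sigma$ acts semilinearly and hence preserves both sums and the cogredience class (so the type) of $X_1+X_2$. Your treatment is if anything slightly more explicit than the paper's about why $\sigma(X_1+X_2)=Y_1+Y_2$ and about the case where $X_1+X_2$ is the whole space.
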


\begin{proof} ($\Rightarrow$)  Suppose that $X_1$---$X_2$ and $Y_1$---$Y_2$ are in the same orbit of $E\big(Oi\big(2\nu+\delta,q\big)\big)$ under the action of ${\rm Aut}\big(Oi\big(2\nu+\delta,q\big)\big).$ Then there exists $\sigma\in{\rm Aut}\big(Oi\big(2\nu+\delta,q\big)\big)$ such that one of the following is true: $\sigma(X_1)=Y_1,$ $\sigma(X_2)=Y_2$; $\sigma(X_1)=Y_2,$ $\sigma(X_2)=Y_1.$ Without loss of generality we can assume that $\sigma(X_1)=Y_1$ and $\sigma(X_2)=Y_2$. Then
$t(X_1)=t(Y_1)$ and $t(X_2)=t(Y_2)$ by Theorem \ref{4.1}. It is sufficient to prove $t(X_1+X_2)=t(Y_1+Y_2)$. In order to prove $t(X_1+X_2)=t(Y_1+Y_2),$ we need only consider three cases: $\delta=0,1$ and 2. We first consider the case $\delta=0$. Let $\delta=0$. By Theorem \ref{ot1}, we know that there exist $k_1\in\mathbb{F}_q^{*2},k_2\ldots,k_{\nu}\in\mathbb{F}_q^*$ and $\pi\in{\rm Aut}(\mathbb{F}_q)$ such that $\sigma=\sigma_{(k_1,k_2,\ldots,k_{\nu},\pi)}.$
By Proposition \ref{4.2}, we know that $X_1+X_2$ and $\sigma_{\pi^{-1}}\sigma(X_1+X_2)$ are the same type orthogonal subspace. It is easy to check that $M(m,2s+\gamma,s,\Gamma)$ is cogredient to $\sigma_{\pi}(M(m,2s+\gamma,s,\Gamma))$. So $\sigma_{\pi^{-1}}\sigma(X_1+X_2)$ and $\sigma(X_1+X_2)$ are the same type orthogonal subspace. Thus
$X_1+X_2$ and $\sigma(X_1+X_2)=Y_1+Y_2$ are the same type orthogonal subspace.

Similar to the case of $\delta=0$, we can prove that
$X_1+X_2$ and $Y_1+Y_2$ are the same type orthogonal subspace when $\delta=1$ and $\delta=2$.

($\Leftarrow$) Suppose that one of the following is true: $(1)$ $t(X_1)=t(Y_1),$ $t(X_2)=t(Y_2),$ $t(X_1+X_2)=t(Y_1+Y_2)$; $(2)$ $t(X_1)=t(Y_2),$ $t(X_2)=t(Y_1),$ $t(X_1+X_2)=t(Y_1+Y_2).$

By Proposition~\ref{2.4}~and~\ref{4.1}, it is easy to check that there exist $T\in O_{2\nu+\delta,\Delta}(\mathbb{F}_q)$ and $\sigma_T\in {\rm Aut}\big(Oi\big(2\nu+\delta,q\big)\big)$ such that
$$\sigma_T(X_1)=X_1T=Y_1~~{\rm and}~~\sigma_T(X_2)=X_2T=Y_2$$
or
$$\sigma_T(X_1)=X_1T=Y_2~~{\rm and}~~\sigma_T(X_2)=X_2T=Y_1.$$
Thus $X_1$---$X_2$ and $Y_1$---$Y_2$ are in the same orbit of $E\big(Oi\big(2\nu+\delta,q\big)\big)$ under the action of ${\rm Aut}\big(Oi\big(2\nu+\delta,q\big)\big)$.
\end{proof}

{\small

\end{document}